\author{Rowan Poklewski-Koziell}
\address{Department of Mathematics and Applied Mathematics, University of Cape Town\\
 Rondebosch 7701
}
\title{A note on Frobenius-Eilenberg-Moore objects in dagger 2-categories}
\keywords{Dagger category, Frobenius monad, Lax functor, Kleisli category, Eilenberg-Moore category}
\newtheorem{theorem}{Theorem}
\begin{document}

\maketitle
\begin{abstract}
We define Frobenius-Eilenberg-Moore objects for a dagger Frobenius monad in an arbitrary dagger 2-category, and extend to the dagger context a well-known universal property of the formal theory of monads. We show that the free completion of a 2-category under Eilenberg-Moore objects extends to the dagger context, provided one is willing to work with such dagger Frobenius monads whose endofunctor part suitably commutes with their unit. Finally, we define dagger lax functors and dagger lax-limits of such functors, and show that Frobenius-Eilenberg-Moore objects are examples of such limits.
\end{abstract}

% NOTE: it is good practice to \label all headings (and proclamations) immediately

\section{Preliminaries}\label{sec-Preliminaries}

A \emph{dagger category} $\mathbf{D}$ is a category equipped with a involutive functor $\dagger: \mathbf{D}^\text{op} \longrightarrow \mathbf{D}$ which is the identity on objects, called the \emph{dagger} of $\mathbf{D}$. A \emph{dagger functor} $F: \mathbf{D} \longrightarrow \mathbf{C}$ between dagger categories $\mathbf{D}$, $\mathbf{C}$ is a functor which commutes with the daggers on $\mathbf{D}$ and $\mathbf{C}$. A  $2$-category $\mathcal{D}$ is a \emph{dagger $2$-category} when each of the hom-categories $\mathcal{D}(A, B)$ are not only (small) categories, but dagger categories. More precisely, given vertically-composable $2$-cells $\alpha$ and $\beta$, and horizontally-composable $2$-cells $\sigma$ and $\theta$ in $\mathcal{D}$, the equalities
\begin{align*}
    (\alpha \cdot \beta)^\dagger = \beta^\dagger \cdot \alpha^\dagger \quad \quad \quad (\sigma * \theta)^\dagger = \sigma^\dagger * \theta^\dagger
\end{align*}
hold, where, here and elsewhere, $\cdot$ and $*$ denote the vertical and horizontal composition of $2$-cells, respectively, and where, as we shall do elsewhere, we have dropped all subscripts on daggers $\dagger$ to refer to particular hom-dagger-categories. The dagger $2$-category \DagCat of small dagger categories, dagger functors and natural transformations is a basic example. Given dagger $2$-categories $\mathcal{D}$, $\mathcal{C}$, a $2$-functor $F : \mathcal{D} \longrightarrow \mathcal{C}$ is a \emph{dagger $2$-functor} when for each pair of objects $D$, $D^\prime \in \mathcal{D}$, the functor
\begin{align*}
    F_{D, D^\prime} : \mathcal{D}(D, D^\prime) \longrightarrow \mathcal{C}(FD, FD^\prime) 
\end{align*}
is a dagger functor.

We shall say that a dagger $2$-category $\mathcal{D}$ is a \textit{full dagger sub-$2$-category} of $\mathcal{C}$ if there is a dagger $2$-functor $I: \mathcal{D} \longrightarrow \mathcal{C}$ such that for all objects $D$, $D^\prime$ of $\mathcal{D}$, the component dagger functor $I_{D, D^\prime} : \mathcal{D}(D, D^\prime) \longrightarrow \mathcal{C}(ID, ID^\prime)$ is an isomorphism of dagger categories. The weaker case of having $I_{D, D^\prime}$ only equivalences of categories which are \textit{unitarily essentially surjective} has no additional value in our work. The reader is encouraged to consult \cite[Chapter 3]{Kar19} for a more detailed account of such dagger equivalences.

If $(D, t)$ is a monad in a dagger $2$-category $\mathcal{D}$, it is obviously a comonad too. \cite{HK15, HK16} proposes that in a dagger $2$-category, the monads of interest are those that additionally satisfy the \emph{Frobenius law}.
\begin{definition}\cite{HK16}\label{daggerFrobMonad}
A monad $(D, t)$ (with multiplication $2$-cell $\mu: t^2 \longrightarrow t$ and unit $2$-cell $\eta: 1 \longrightarrow t$) in a dagger $2$-category $\mathcal{D}$ is a dagger Frobenius monad when the diagram
$$\bfig
    \node A(0,500)[t^2]
    \node B(750,500)[t^3]
    \node C(0,0)[t^3]
    \node D(750,0)[t^2]
    \arrow/->/[A`B;t\mu^\dagger]
    \arrow/->/[A`C;\mu^\dagger t]
    \arrow|r|/->/[B`D;\mu t]
    \arrow|b|/->/[C`D;t \mu]
\efig$$
commutes. Furthermore, $\mathsf{DFMnd}(\mathcal{D})$ is the dagger $2$-category in which:
\begin{itemize}
    \item $0$-cells are dagger Frobenius monads in $\mathcal{D}$;
    \item given $0$-cells $(A, s)$ and $(D, t)$, a $1$-cell $(f, \sigma): (A, s) \longrightarrow (D, t)$ consists of a $1$-cell $f: A \longrightarrow D$ and a $2$-cell $\sigma: tf \longrightarrow fs$ in $\mathcal{D}$, such that the diagrams:
    \begin{equation}\label{eq:morphOfMnd}
        \bfig
        \node TAL(0,650)[tfs]
        \node T2AR(800,650)[fss]
        \node P(-450,325)[ttf]
        \node T2AL(0,0)[tf]
        \node TAR(800,0)[fs]
        \arrow/->/[TAL`T2AR;\sigma s]
        \arrow/->/[P`TAL;t\sigma]
        \arrow|b|/->/[P`T2AL;\mu^tf]
        \arrow|r|/->/[T2AR`TAR;f\mu^s]
        \arrow|r|/->/[T2AL`TAR;\sigma]
        \node TAL2(2000,650)[fss]
        \node T2AR2(2800,650)[fs]
        \node P2(1550,325)[tfs]
        \node T2AL2(2000,0)[ttf]
        \node TAR2(2800,0)[tf]
        \arrow/->/[TAL2`T2AR2;f\mu^s]
        \arrow/->/[P2`TAL2;\sigma s]
        \arrow|b|/->/[P2`T2AL2;t\sigma^\dagger]
        \arrow|r|/->/[T2AR2`TAR2;\sigma^\dagger]
        \arrow|r|/->/[T2AL2`TAR2;\mu^tf]
        \efig
    \end{equation}
    \[\bfig
        \node A1IB1(675,-400)[tf]
        \node 1AI1B(1675,-400)[fs]
        \node AB(1175,-800)[f]
        \arrow/->/[A1IB1`1AI1B;\sigma]
        \arrow|l|/->/[AB`A1IB1;\eta^tf]
        \arrow|r|/->/[AB`1AI1B;f\eta^s]
        \efig\]
    commute, where $\mu^t: t^2 \longrightarrow t$ and $\mu^s: s^2 \longrightarrow s$ are the multiplications of $t$ and $s$, respectively, and $\eta^t: 1 \longrightarrow t$ and $\eta^s: 1 \longrightarrow s$ are the units of $t$ and $s$, respectively. Composition of $1$-cells is defined as $(g, \gamma)\cdot(f, \sigma) = (gf, g\sigma\cdot\gamma f)$;
    \item given $0$-cells $(A, s)$, $(D, t)$ and $1$-cells $(f, \sigma), (g, \gamma): (A, s) \longrightarrow (D, t)$ in $\mathsf{DFMnd}(\mathcal{D})$, a $2$-cell $\alpha: (f, \sigma) \longrightarrow (g, \gamma)$ in $\mathsf{DFMnd}(\mathcal{D})$ is a $2$-cell $\alpha : f \longrightarrow g$ in $\mathcal{D}$, such that the following diagrams
    $$\bfig
    \node A(0,500)[tf]
    \node B(750,500)[tg]
    \node C(0,0)[fs]
    \node D(750,0)[gs]
    \arrow/->/[A`B;t\alpha]
    \arrow/->/[A`C;\sigma]
    \arrow|r|/->/[B`D;\gamma]
    \arrow|b|/->/[C`D;\alpha s]
    \node A(1550,500)[tg]
    \node B(2300,500)[tf]
    \node C(1550,0)[gs]
    \node D(2300,0)[fs]
    \arrow/->/[A`B;t\alpha^\dagger]
    \arrow/->/[A`C;\gamma]
    \arrow|r|/->/[B`D;\sigma]
    \arrow|b|/->/[C`D;\alpha^\dagger s]
\efig$$
commute. Vertical and horizontal composition of $2$-cells is induced by the corresponding vertical and horizontal composition of $2$-cells in $\mathcal{D}$, as is the dagger on $2$-cells induced by the dagger on $2$-cells in $\mathcal{D}$.
\end{itemize}
There is an inclusion dagger $2$-functor $I: \mathcal{D} \longrightarrow \mathsf{DFMnd}(\mathcal{D})$, defined on $0$-cells by $I(D) = (D, 1)$, on $1$-cells by $I(f) = (f, 1)$, and on $2$-cells by $I(\alpha) = \alpha$.
\end{definition}

A dagger Frobenius monad in the dagger $2$-category \DagCat is of course simply a monad $(T, \mu, \eta)$ on a dagger category $\mathbf{D}$ whose endofunctor part $T$ is a dagger functor, and such that
\begin{align*}
    T(\mu_D) \mu^\dagger_{TD} = \mu_{TD} T(\mu^\dagger_D)
\end{align*}
for each $D$ in $\mathbf{D}$.

One may easily verify that any dagger Frobenius monad is a \emph{Frobenius monad} in the sense of \cite{Str04} -- however, neither that paper nor \cite{Lau} explore monads in the dagger context. In particular, algebras for these monads should satisfy an additional condition, so that they may behave quite differently from their non-dagger counterparts.

\begin{definition}
Let $T = (T, \mu, \eta)$ be a dagger Frobenius monad on a dagger category $\mathbf{D}$. A Frobenius-Eilenberg-Moore algebra (or FEM-algebra) for $T$ is an Eilenberg-Moore algebra $(D, \delta)$ for $T$, such that the diagram
    $$\bfig
        \node A(0,500)[T(D)]
        \node B(750,500)[T^2(D)]
        \node C(0,0)[T^2(D)]
        \node D(750,0)[T(D)]
        \arrow/->/[A`B;T(\delta^\dagger)]
        \arrow/->/[A`C;\mu^\dagger_D]
        \arrow|r|/->/[B`D;\mu_D]
        \arrow|b|/->/[C`D;T(\delta)]
    \efig$$
-- called the Frobenius law diagram for the algebra $(D, \delta)$ -- commutes. The class of all Frobenius-Eilenberg-Moore algebras and the class of all homomorphisms of Eilenberg-Moore algebras between FEM-algebras form a dagger category, which is denoted by $\mathsf{FEM}(\mathbf{D}, T)$.
\end{definition}

An \emph{adjunction} in a dagger $2$-category $\mathcal{D}$ is simply an adjunction in the underlying $2$-category.

For dagger $2$-categories $\mathcal{A}$, $\mathcal{D}$, there is $2$-category $[\mathcal{A}, \mathcal{D}]$, called the \emph{dagger 2-functor category}, consisting of dagger $2$-functors, $2$-natural transformations, and modifications. There is no need to specify ``dagger $2$-natural transformations'': given dagger $2$-functors $F, G: \mathcal{A} \longrightarrow \mathcal{D}$ in $[\mathcal{A}, \mathcal{D}]$, a $2$-natural transformation is a family $\phi = \big(\phi_A: FA \longrightarrow GA\big)_{A \in \mathcal{A}}$ of $1$-cells in $\mathcal{D}$, such that the diagram
    $$\bfig
        \node A(0,500)[\mathcal{A}(A, B)]
        \node B(1500,500)[\mathcal{D}(FA, FB)]
        \node C(0,0)[\mathcal{D}(GA, GB)]
        \node D(1500,0)[\mathcal{D}(FA, GB)]
        \arrow/->/[A`B;F_{A, B}]
        \arrow/->/[A`C;G_{A, B}]
        \arrow|r|/->/[B`D;\mathcal{D}(FA, \phi_{B})]
        \arrow|b|/->/[C`D;\mathcal{D}(\phi_{A}, GB)]
    \efig$$
commutes for all objects $A$, $B$ in $\mathcal{D}$, and clearly the representable functors $\mathcal{D}(FA, \phi_{B})$ and $\mathcal{D}(\phi_{A}, GB)$ of this diagram are of course dagger functors themselves. The dagger structure on $\mathcal{D}$ then naturally induces a dagger structure on $[\mathcal{A}, \mathcal{D}]$.

A dagger $2$-functor $F: \mathcal{D} \longrightarrow \mathsf{DagCat}$ is \emph{representable}, when there is some $D$ in $\mathcal{D}$ and an isomorphism $\phi: \mathcal{D}(D, -) \longrightarrow F$ in $[\mathcal{D}, \mathsf{DagCat}]$. The pair $(D, \phi)$ is called a \emph{representation} of $F$. What is worth remarking is that, for a dagger $2$-category $\mathcal{C}$ and a dagger $2$-functor $R: \mathcal{D} \longrightarrow \mathcal{C}$, when, for each object $C$ of $\mathcal{C}$, the dagger $2$-functor $\mathcal{C}(C, R-): \mathcal{D} \longrightarrow \mathsf{DagCat}$ is representable -- with representation $(LC, \phi_C)$ -- one has that the unique (up to $2$-natural isomorphism) $2$-functor $L : \mathcal{C} \longrightarrow \mathcal{D}$ such that 
    $$\bfig
        \node A(0,0)[\mathcal{D}(LC, D)]
        \node B(750,0)[\mathcal{C}(C, RD)]
        \arrow/->/[A`B;\phi_{C, D}]
    \efig$$
is $2$-natural in both $C$ and $D$, is also a dagger $2$-functor. This is easily seen from the standard construction of $L$, as displayed in, say, \cite[Section 1.10]{Kel}. Furthermore, $L$ is of course the left $2$-adjoint of $R$ and such $2$-adjunctions correspond bijectively to $2$-natural isomorphisms $\phi$ in the above display.

Finally, one also has a Yoneda Lemma for dagger $2$-categories: there are dagger $2$-functors $E$, $N :[\mathcal{D}^\text{op}, \mathsf{DagCat}] \times \mathcal{D} \longrightarrow \mathsf{DagCat}$, given, respectively, on $0$-cells by $E(F, D) = F(D)$ and $N(F, D) = [\mathcal{D}^\text{op}, \mathsf{DagCat}](\mathcal{D}(-, D), F)$ and, furthermore, an isomorphism $y: N \longrightarrow E$.

Dagger Frobenius monads and categories of Frobenius-Eilenberg-Moore algebras for such monads were first considered in \cite{HK15} and \cite{HK16}, in which they are shown to include the important example of quantum measurements. In this paper, we continue work initiated in those papers in pursuit of a \emph{formal theory of dagger Frobenius monads} in the spirit of \cite{Str72} and \cite{LS02}.

\section{Frobenius-Eilenberg-Moore objects}\label{sec-FEM-results}

Let $(D, t)$ (with multiplication and unit given, respectively, by $\mu$ and $\eta$) be a dagger Frobenius monad in a dagger $2$-category $\mathcal{D}$. Then $\big(\mathcal{D}(A, D), \mathcal{D}(A, t)\big)$ is a dagger Frobenius monad (with multiplication and unit given, respectively, by $\mathcal{D}(A, \mu)$ and $\mathcal{D}(A, \eta)$) in $\mathsf{DagCat}$, for every object $A$ of $\mathcal{D}$. We may now construct the dagger category of Frobenius-Eilenberg-Moore algebras $\mathsf{FEM}\big(\mathcal{D}(A, D), \mathcal{D}(A, t)\big)$ for the dagger Frobenius monad $\mathcal{D}(A, t)$ on the dagger category $\mathcal{D}(A, D)$. Applying these observations to the case $\mathcal{D} = \mathsf{DagCat}$, we arrive at the following result for a dagger category $\mathbf{D}$ and a dagger Frobenius monad $(T, \mu, \eta)$ on $\mathbf{D}$.

\begin{proposition}\label{prop1}
Suppose $F: \mathbf{A} \longrightarrow \mathbf{D}$ is a dagger functor, $(T, \mu, \eta)$ is a dagger Frobenius monad on the dagger category $\mathbf{D}$, and $\sigma: TF \longrightarrow F$ is a natural transformation. $(FA, \sigma_A)_{A \in \mathbf{A}}$ is a family of Frobenius-Eilenberg-Moore algebras for $T$ if and only if $(F, \sigma)$ is a Frobenius-Eilenberg-Moore algebra for the dagger Frobenius monad $\mathsf{DagCat}(\mathbf{A}, T)$ on the dagger category $\mathsf{DagCat}(\mathbf{A}, \mathbf{D})$. Furthermore, given another such Frobenius-Eilenberg-Moore algebra $(G, \gamma)$ for $\mathsf{DagCat}(\mathbf{A}, T)$, and a natural transformation $\alpha: F \longrightarrow G$, the family $\big(\alpha_A: (FA, \sigma_A) \longrightarrow (GA, \gamma_A)\big)_{A \in \mathbf{A}}$ is a family of Eilenberg-Moore algebra homomorphisms if and only if $\alpha: (F, \sigma) \longrightarrow (G, \gamma)$ is a homomorphism of Eilenberg-Moore algebras for the monad $\mathsf{DagCat}(\mathbf{A}, T)$.
\end{proposition}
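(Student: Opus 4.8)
The plan is to reduce every clause of the statement to a family of pointwise conditions in $\mathbf{D}$ and then to recognise each such condition as an axiom for the corresponding structure over $T$. First I would record explicitly how the monad $\mathsf{DagCat}(\mathbf{A}, T)$ is assembled: its endofunctor sends a dagger functor $H$ to the composite $TH$ and a natural transformation $\theta$ to the whiskering $T\theta$; its multiplication at $H$ is $\mu H$, with components $\mu_{HA}$; and its unit at $H$ is $\eta H$, with components $\eta_{HA}$. I would also note that the dagger on the hom-dagger-category $\mathsf{DagCat}(\mathbf{A}, \mathbf{D})$ is computed pointwise, so that $\sigma^\dagger$ has components $\sigma_A^\dagger$ and $(\mu F)^\dagger$ has components $\mu^\dagger_{FA}$; since $T$ is a dagger functor, $T(\sigma^\dagger)$ then has components $T(\sigma_A^\dagger) = (T\sigma_A)^\dagger$.

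With these identifications in hand, the Eilenberg-Moore part of the equivalence rests on the standard fact that equality of natural transformations is tested pointwise. The unit law $\sigma \cdot \eta F = 1_F$ and the associativity law $\sigma \cdot T\sigma = \sigma \cdot \mu F$ for $(F, \sigma)$ hold in $\mathsf{DagCat}(\mathbf{A}, \mathbf{D})$ if and only if their $A$-components $\sigma_A \cdot \eta_{FA} = 1_{FA}$ and $\sigma_A \cdot T(\sigma_A) = \sigma_A \cdot \mu_{FA}$ hold in $\mathbf{D}$ for every $A$; that is, $(F, \sigma)$ is an Eilenberg-Moore algebra for $\mathsf{DagCat}(\mathbf{A}, T)$ exactly when each $(FA, \sigma_A)$ is an Eilenberg-Moore algebra for $T$. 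Next I would treat the Frobenius law: written out for $(F, \sigma)$ over $\mathsf{DagCat}(\mathbf{A}, T)$, the Frobenius law diagram asserts the equality $\mu F \cdot T(\sigma^\dagger) = T\sigma \cdot (\mu F)^\dagger$ of natural transformations $TF \longrightarrow TF$. Passing to $A$-components and applying the pointwise descriptions above converts this into $\mu_{FA} \cdot T(\sigma_A^\dagger) = T(\sigma_A) \cdot \mu^\dagger_{FA}$ for every $A$, which is verbatim the Frobenius law diagram for the algebra $(FA, \sigma_A)$ over $T$. Combining the two paragraphs yields the first claim: $(F, \sigma)$ is a Frobenius-Eilenberg-Moore algebra precisely when every $(FA, \sigma_A)$ is.

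For the homomorphism statement, a morphism $\alpha: (F, \sigma) \longrightarrow (G, \gamma)$ of Eilenberg-Moore algebras for $\mathsf{DagCat}(\mathbf{A}, T)$ is a natural transformation $\alpha: F \longrightarrow G$ satisfying $\gamma \cdot T\alpha = \alpha \cdot \sigma$, with no further Frobenius condition imposed on homomorphisms between FEM-algebras. Taking $A$-components gives $\gamma_A \cdot T(\alpha_A) = \alpha_A \cdot \sigma_A$, so that each $\alpha_A$ is an Eilenberg-Moore homomorphism $(FA, \sigma_A) \longrightarrow (GA, \gamma_A)$, and conversely. I do not anticipate a genuine obstacle: the whole argument is bookkeeping driven by pointwise testing of natural-transformation identities. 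The one place that deserves care, and the only point at which the dagger structure is actually invoked, is verifying that both the dagger of $\mathsf{DagCat}(\mathbf{A}, \mathbf{D})$ and the action of $\mathsf{DagCat}(\mathbf{A}, T)$ are computed pointwise, since this is exactly what lets the global Frobenius law diagram decompose into its pointwise instances.
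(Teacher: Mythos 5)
Your proposal is correct and follows essentially the same route as the paper: the paper's proof is precisely the observation that the Eilenberg--Moore and Frobenius law diagrams for $(F,\sigma)$ over $\mathsf{DagCat}(\mathbf{A},T)$ commute if and only if their $A$-components commute for every $A$, since the monad acts by whiskering and the dagger on $\mathsf{DagCat}(\mathbf{A},\mathbf{D})$ is pointwise. You merely make explicit the ``routine calculation'' the paper leaves implicit, including the one point of care (that $T(\sigma^\dagger)$ and $(\mu F)^\dagger$ are computed componentwise), so there is nothing to add.
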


\begin{proof}
A routine calculation shows that, for every object $A$ in $\mathbf{A}$, the diagram
    $$\bfig
        \node A(0,500)[T^2(FA)]
        \node B(1000,500)[T(FA)]
        \node C(0,0)[T(FA)]
        \node D(1000,0)[FA]
        \node E(1733,500)[FA]
        \arrow/->/[A`B;\mu_{FA}]
        \arrow/->/[A`C;T(\sigma_A)]
        \arrow|r|/->/[B`D;\sigma_A]
        \arrow|b|/->/[C`D;\sigma_A]
        \arrow/->/[E`B;\eta_{FA}]
        \arrow|b|/=/[E`D; ]
    \efig$$
commutes if and only if the diagram
    $$\bfig
        \node A(0,500)[\mathsf{DagCat}(\mathbf{A}, T)^2(F)]
        \node B(1500,500)[\mathsf{DagCat}(\mathbf{A}, T)(F)]
        \node C(0,0)[\mathsf{DagCat}(\mathbf{A}, T)(F)]
        \node D(1500,0)[F]
        \node E(2600,500)[F]
        \arrow/->/[A`B;\mathsf{DagCat}(\mathbf{A}, \mu)(F)]
        \arrow/->/[A`C;\mathsf{DagCat}(\mathbf{A}, T)(\sigma)]
        \arrow|r|/->/[B`D;\sigma]
        \arrow|b|/->/[C`D;\sigma]
        \arrow/->/[E`B;\mathsf{DagCat}(\mathbf{A}, \eta)(F)]
        \arrow|b|/=/[E`D; ]
    \efig$$
commutes. That is, the family $(FA, \sigma_A)_{A \in \mathbf{A}}$ is a family of Eilenberg-Moore algebras for $T$ if and only if $(F, \sigma)$ is an Eilenberg-Moore algebra for the monad $\mathsf{DagCat}(\mathbf{A}, T)$ on the (dagger) category $\mathsf{DagCat}(\mathbf{A}, \mathbf{D})$. Likewise, for every object $A$ in $\mathbf{A}$, the diagram
    $$\bfig
        \node A(0,500)[T(FA)]
        \node B(1200,500)[T^2(FA)]
        \node C(0,0)[T^2(FA)]
        \node D(1200,0)[T(FA)]
        \arrow/->/[A`B;T(\sigma^\dagger_{A})]
        \arrow/->/[A`C;\mu^\dagger_{FA}]
        \arrow|r|/->/[B`D;\mu_{FA}]
        \arrow|b|/->/[C`D;T(\sigma_{A})]
    \efig$$
commutes if and only if the diagram
    $$\bfig
        \node A(0,500)[\mathsf{DagCat}(\mathbf{A}, T)(F)]
        \node B(1500,500)[\mathsf{DagCat}(\mathbf{A}, T)^2(F)]
        \node C(0,0)[\mathsf{DagCat}(\mathbf{A}, T)^2(F)]
        \node D(1500,0)[\mathsf{DagCat}(\mathbf{A}, T)(F)]
        \arrow/->/[A`B;\mathsf{DagCat}(\mathbf{A}, T)(\sigma^\dagger)]
        \arrow/->/[A`C;\mathsf{DagCat}(\mathbf{A}, \mu)^\dagger(F)]
        \arrow|r|/->/[B`D;\mathsf{DagCat}(\mathbf{A}, \mu)(F)]
        \arrow|b|/->/[C`D;\mathsf{DagCat}(\mathbf{A}, T)(\sigma)]
    \efig$$
commutes. The second part of the proposition is similarly proved.
\end{proof}

\begin{theorem}\label{th1}
Suppose $(T, \mu, \eta)$ is a dagger Frobenius monad on the dagger category $\mathbf{D}$. For every dagger category $\mathbf{A}$, there is an isomorphism of dagger categories
\begin{align*}
        \mathsf{DagCat}(\mathbf{A}, \mathsf{FEM}(\mathbf{D}, T)) \cong \mathsf{FEM}(\mathsf{DagCat}(\mathbf{A}, \mathbf{D}), \mathsf{DagCat}(\mathbf{A}, T))
\end{align*}
which is $2$-natural in each of the arguments.
\end{theorem}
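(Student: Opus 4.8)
The plan is to exhibit the isomorphism as a mere \emph{repackaging} of data, with Proposition \ref{prop1} doing essentially all the substantive work of translating the pointwise Frobenius-Eilenberg-Moore conditions into a single condition for the functor-category monad. First I would set up the object correspondence using the forgetful dagger functor $U: \mathsf{FEM}(\mathbf{D}, T) \longrightarrow \mathbf{D}$, which is a dagger functor since the morphisms and the dagger on $\mathsf{FEM}(\mathbf{D}, T)$ are those inherited from $\mathbf{D}$. A dagger functor $H: \mathbf{A} \longrightarrow \mathsf{FEM}(\mathbf{D}, T)$ is determined by the dagger functor $F := UH: \mathbf{A} \longrightarrow \mathbf{D}$ together with, for each $A$, the structure map $\sigma_A: T(FA) \longrightarrow FA$ arising from $HA = (FA, \sigma_A)$. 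That each $H(a)$ is an algebra homomorphism is exactly the naturality of the family $(\sigma_A)_A$, so these data amount to a dagger functor $F$ together with a natural transformation $\sigma: TF \longrightarrow F$, that is, a morphism $\sigma: \mathsf{DagCat}(\mathbf{A}, T)(F) \longrightarrow F$ in $\mathsf{DagCat}(\mathbf{A}, \mathbf{D})$. That each $HA$ is an FEM-algebra is precisely that $(FA, \sigma_A)_{A \in \mathbf{A}}$ is a family of FEM-algebras, which by Proposition \ref{prop1} holds if and only if $(F, \sigma)$ is an FEM-algebra for $\mathsf{DagCat}(\mathbf{A}, T)$. This yields a bijection on objects, whose inverse sends $(F, \sigma)$ to the functor $A \mapsto (FA, \sigma_A)$, $a \mapsto Fa$, well-defined and dagger by the same proposition together with the fact that the dagger on $\mathsf{FEM}(\mathbf{D}, T)$ is computed in $\mathbf{D}$.

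Next I would treat morphisms. A morphism $H \longrightarrow H'$ in $\mathsf{DagCat}(\mathbf{A}, \mathsf{FEM}(\mathbf{D}, T))$ is a natural transformation whose components $\alpha_A: (FA, \sigma_A) \longrightarrow (GA, \gamma_A)$ are algebra homomorphisms natural in $A$; equivalently, it is a natural transformation $\alpha: F \longrightarrow G$ all of whose components are homomorphisms, which by the second part of Proposition \ref{prop1} is exactly an Eilenberg-Moore homomorphism $\alpha: (F, \sigma) \longrightarrow (G, \gamma)$ for $\mathsf{DagCat}(\mathbf{A}, T)$. Since both assignments leave the underlying natural transformation unchanged, they manifestly preserve identities and composites, so the comparison is a functor and an isomorphism of underlying categories.

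It then remains to match the two dagger structures. On either side the dagger of a morphism is, in each case, the componentwise $\mathbf{D}$-dagger of the underlying natural transformation, so the comparison commutes with the daggers and is an isomorphism of dagger categories. The only place the Frobenius hypothesis intervenes is the implicit fact that $\mathsf{FEM}(\mathbf{D}, T)$ is closed under the dagger, so that both hom-dagger-categories are genuinely defined and the dagger of a homomorphism between FEM-algebras is again such a homomorphism; this is already built into the definition of $\mathsf{FEM}$ and into Proposition \ref{prop1}.

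Finally I would verify $2$-naturality in $\mathbf{A}$. For a dagger functor $K: \mathbf{A}' \longrightarrow \mathbf{A}$, precomposition with $K$ on the left and the functor induced between the algebra categories by $\mathsf{DagCat}(K, \mathbf{D})$ and $\mathsf{DagCat}(K, T)$ on the right commute strictly with the isomorphism, since the latter is defined purely by re-indexing the same underlying data and restriction along $K$ commutes both with $U$ and with the formation of components; here one uses that $\mathsf{DagCat}(K, \mathbf{D})$ is a strict morphism of the two functor-category monads. The analogous check for natural transformations between such functors $K$ upgrades this to a $2$-natural isomorphism, and naturality in the remaining variable is obtained in exactly the same way by re-indexing. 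The main obstacle is thus not any single deep step but the bookkeeping of the first two paragraphs together with the care needed to identify the two dagger structures; because Proposition \ref{prop1} already absorbs the genuinely Frobenius-specific content, what remains is essentially formal.
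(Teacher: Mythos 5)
Your proposal is correct and follows essentially the same route as the paper: both directions of the object correspondence are reduced to Proposition \ref{prop1} via the forgetful dagger functor, the morphism correspondence is handled by the second part of that proposition, and the dagger-compatibility and $2$-naturality are observed to be formal consequences of the fact that the comparison merely repackages the same underlying data. Your write-up is somewhat more explicit than the paper's about why the two dagger structures agree, but there is no substantive difference in approach.
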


\begin{proof}
Each dagger functor $\overline{F}: \mathbf{A} \longrightarrow \mathsf{FEM}(\mathbf{D}, T)$ determines a dagger functor $F = U^T\overline{F} : \mathbf{A} \longrightarrow \mathbf{D}$ and a family $\big(FA, \sigma_A\big)_{A \in \mathbf{A}}$ of FEM-algebras, where $U^T: \mathsf{FEM}(\mathbf{D}, T) \longrightarrow \mathbf{D}$ is the forgetful (dagger) functor. Since $\overline{F}$ is a functor, the family $\sigma = \big(\sigma_A : TFA \longrightarrow FA\big)$ is a natural transformation $\sigma: TF \longrightarrow F$. Therefore, by Proposition \ref{prop1}, $(F, \sigma)$ is a FEM-algebra for the dagger Frobenius monad $\mathsf{DagCat}(\mathbf{A}, T)$ on the dagger category $\mathsf{DagCat}(\mathbf{A}, \mathbf{D})$.

Conversely, given a dagger functor $F: \mathbf{A} \longrightarrow \mathbf{D}$ and a natural transformation $\sigma: TF \longrightarrow F$ such that $(F, \sigma)$ is a FEM-algebra for the dagger Frobenius monad $\mathsf{DagCat}(\mathbf{A}, T)$, for each object $A$ of $\mathbf{A}$, $\big(FA, \sigma_A\big)$ is a FEM-algebra for $T$, again by Proposition \ref{prop1}. Since $\sigma: TF \longrightarrow F$ is a natural transformation, for each morphism $f: A \longrightarrow B$ of $\mathbf{A}$, $Ff: FA \longrightarrow F(B)$ is a morphism $\big(FA, \sigma_A\big) \longrightarrow \big(FB, \sigma_{B}\big)$ of Eilenberg-Moore algebras. This now defines a functor $\overline{F}: \mathbf{A} \longrightarrow \mathsf{FEM}(\mathbf{D}, T)$. 

Next, the second part of Proposition \ref{prop1} similarly establishes correspondences between natural transformations $\overline{F} \longrightarrow \overline{G}$ and homomorphisms $(F, \sigma) \longrightarrow (G, \gamma)$ of Eilenberg-Moore algebras for the monad $\mathsf{DagCat}(\mathbf{A}, T)$, which preserve daggers.

Clearly, these correspondences are inverses of each other. It is routine to show that each is $2$-natural in each of the arguments.
\end{proof}

The previous theorem suggests our main definition.

\begin{definition}\label{FEMobj}
For a dagger $2$-category $\mathcal{D}$, a dagger Frobenius monad $(D, t)$ in $\mathcal{D}$ is said to have a Frobenius-Eilenberg-Moore object (or FEM-object) if the dagger $2$-functor
\begin{align*}
    \mathsf{FEM}\big(\mathcal{D}(-, D), \mathcal{D}(-, t)\big) : \mathcal{D}^{\text{op}} \longrightarrow \mathsf{DagCat}
\end{align*}
whose object-part is defined by $A \longmapsto \mathsf{FEM}\big(\mathcal{D}(A, D), \mathcal{D}(A, t)\big)$, is representable. A choice of a representing object in $\mathcal{D}$, denoted $\mathsf{FEM}(D, t)$, is called the Frobenius-Eilenberg-Moore object for $(D, t)$. $\mathcal{D}$ is further said to have Frobenius-Eilenberg-Moore objects if every dagger Frobenius monad $(D, t)$ in $\mathcal{D}$ has a Frobenius-Eilenberg-Moore object.
\end{definition}

\begin{proposition}\label{prop2}
Suppose $(D, t)$ is a dagger Frobenius monad in the dagger $2$-category $\mathcal{D}$. For every object $A$ of $\mathcal{D}$, there is an isomorphism of dagger categories
\begin{align}\label{monadsToRep}
    \mathsf{DFMnd}(\mathcal{D})((A, 1), (D, t)) \cong \mathsf{FEM}(\mathcal{D}(A, D), \mathcal{D}(A, t))
\end{align}
$2$-natural in each of the arguments.
\end{proposition}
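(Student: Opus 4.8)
The plan is to unwind both sides of the claimed isomorphism explicitly and show that the data of a $1$-cell $(f, \sigma): (A, 1) \longrightarrow (D, t)$ in $\mathsf{DFMnd}(\mathcal{D})$ is precisely the data of a Frobenius-Eilenberg-Moore algebra for the dagger Frobenius monad $\mathcal{D}(A, t)$ on $\mathcal{D}(A, D)$. First I would observe that a $1$-cell $(A, 1) \longrightarrow (D, t)$ consists of a $1$-cell $f: A \longrightarrow D$ together with a $2$-cell $\sigma: tf \longrightarrow f$ (since the source monad is trivial, the target of $\sigma$ is $f1 = f$), and that under the identification $\mathcal{D}(A, -)$ the $1$-cell $f$ becomes an object of $\mathcal{D}(A, D)$ and $\sigma$ becomes a morphism $\mathcal{D}(A, t)(f) \longrightarrow f$ in $\mathcal{D}(A, D)$, which is exactly a $T$-algebra structure map for the monad $T = \mathcal{D}(A, t)$.

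The key step is to match the defining axioms on each side. Specializing the two hexagonal diagrams in~\eqref{eq:morphOfMnd} to the case $s = 1$ (so $\mu^s$ and $\eta^s$ are identities), the first hexagon collapses to the associativity/multiplication law $\sigma \cdot \mu^t f = \sigma \cdot t\sigma$ — the Eilenberg-Moore algebra axiom — and the triangle collapses to the unit law $\sigma \cdot \eta^t f = 1_f$. The second hexagon of~\eqref{eq:morphOfMnd}, which involves $\sigma^\dagger$ and which has no analogue in the ordinary (non-dagger) theory of monads, collapses precisely to the Frobenius law diagram for the algebra $(f, \sigma)$, namely $\mu^t f \cdot t\sigma^\dagger = \sigma^\dagger \cdot \mu^t f$ rewritten under $\mathcal{D}(A, -)$ as the commuting square defining an FEM-algebra. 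Thus the three axioms for a $1$-cell in $\mathsf{DFMnd}(\mathcal{D})$ correspond bijectively to the FEM-algebra axioms, giving a bijection on objects of the two dagger categories.

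Next I would treat the $2$-cells. A $2$-cell $\alpha: (f, \sigma) \longrightarrow (g, \gamma)$ in $\mathsf{DFMnd}(\mathcal{D})$ is a $2$-cell $\alpha: f \longrightarrow g$ in $\mathcal{D}$ satisfying the two naturality squares; with $s = 1$ the first square becomes $\gamma \cdot t\alpha = \alpha \cdot \sigma$, which under $\mathcal{D}(A, -)$ is exactly the condition that $\alpha$ is a homomorphism of $\mathcal{D}(A, t)$-algebras, while the second square (involving $\alpha^\dagger$) is the dagger of the first and so is automatically equivalent to it in a dagger category. Hence morphisms correspond, and since the dagger on both sides is induced by the dagger on $2$-cells of $\mathcal{D}$, the correspondence is a dagger functor with a dagger-functor inverse, i.e. an isomorphism of dagger categories.

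Finally, for $2$-naturality I would note that the assignment is built entirely from the representable $2$-functors $\mathcal{D}(A, -)$ and their functorial action on $1$-cells and $2$-cells out of $A$, all of which is manifestly natural; I would spell out naturality in $A$ (contravariant, via precomposition) and in $(D, t)$ (covariant) by checking the relevant squares of dagger functors commute, which is routine. The main obstacle I anticipate is bookkeeping rather than conceptual: one must verify carefully that the \emph{second} hexagon of~\eqref{eq:morphOfMnd} reduces exactly to the Frobenius law diagram of the FEM-algebra definition, being attentive to the placement of the daggers $\sigma^\dagger$ and $\mu^{t\dagger}$ and to the direction of the arrows, since it is precisely this axiom — absent from the classical formal theory of monads — that makes the monad-morphism category land in $\mathsf{FEM}$ rather than in the ordinary Eilenberg-Moore category.
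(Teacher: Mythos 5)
Your overall strategy --- unwind a $1$-cell $(A,1)\to(D,t)$ in $\mathsf{DFMnd}(\mathcal{D})$ and match its axioms against those of an FEM-algebra for $\mathcal{D}(A,t)$ --- is the same as the paper's, and your treatment of the first hexagon, the unit triangle, and the first $2$-cell square is correct. But the two steps you dismiss as ``bookkeeping'' are exactly where the substance lies, and as stated they are wrong. Setting $s=1$ in the second hexagon of (\ref{eq:morphOfMnd}) yields $\sigma^\dagger\cdot\sigma=\mu^t f\cdot t\sigma^\dagger$, which is the statement that $\sigma^\dagger\colon(f,\sigma)\to(tf,\mu^t f)$ is a homomorphism of Eilenberg--Moore algebras into the free algebra; it is \emph{not} the Frobenius law diagram for $(f,\sigma)$, which reads $\mu^t f\cdot t\sigma^\dagger=t\sigma\cdot\mu^{t\dagger}f$. (The equation you write, $\mu^t f\cdot t\sigma^\dagger=\sigma^\dagger\cdot\mu^t f$, does not even compose: $\mu^t f$ has codomain $tf$ while $\sigma^\dagger$ has domain $f$.) The equivalence between ``$\sigma^\dagger$ is an algebra homomorphism'' and the Frobenius law for the algebra is a genuine lemma --- \cite[Lemma 6.8]{HK16}, whose proof uses the Frobenius law of the monad $t$ itself together with the algebra and unit axioms --- and the paper's proof of Proposition~\ref{prop2} consists essentially of reducing to that lemma. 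Your argument must either cite it or reprove it; it cannot be read off the diagram.

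The same issue recurs for $2$-cells. With $s=1$ the two squares read $\gamma\cdot t\alpha=\alpha\cdot\sigma$ and $\sigma\cdot t\alpha^\dagger=\alpha^\dagger\cdot\gamma$. The second is \emph{not} the dagger of the first: applying $\dagger$ to the first gives $t\alpha^\dagger\cdot\gamma^\dagger=\sigma^\dagger\cdot\alpha^\dagger$, a $2$-cell $g\to tf$, whereas the second square is a $2$-cell $tg\to f$ asserting that $\alpha^\dagger$ is itself an algebra homomorphism $(g,\gamma)\to(f,\sigma)$. That this holds automatically once $\alpha$ is a homomorphism between FEM-algebras is again a lemma (\cite[Lemma 6.7]{HK16}), not a formal consequence of the ambient dagger. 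With these two citations (or independent proofs of their content) inserted, your argument closes the gap and coincides with the paper's.
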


\begin{proof}
One easily shows that, to give a pair $(f, \sigma)$ in which $f: A \longrightarrow D$ is a $1$-cell and $\sigma: tf \longrightarrow f$ a $2$-cell in $\mathcal{D}$ satisfying the top-left and bottom diagrams (\ref{eq:morphOfMnd}) for the monads $(A, 1)$ and $(D, t)$ is exactly to give an Eilenberg-Moore algebra for the monad $\mathcal{D}(A, t)$ on $\mathcal{D}(A, D)$. $(F, \sigma)$ is, moreover, a morphism of dagger Frobenius monads $(A, 1) \longrightarrow (D, t)$, exactly when, by the top-right diagram (\ref{eq:morphOfMnd}), $\sigma^\dagger \cdot \sigma = \mu f \cdot t \sigma^\dagger$, which is the statement that $\sigma^\dagger: (f, \sigma) \longrightarrow (tf, \mu f) = (\mathcal{D}(A, t)(f), \mathcal{D}(A, \mu)(f))$ is a homomorphism of Eilenberg-Moore algebras for the monad $\mathcal{D}(A, t)$. By \cite[Lemma 6.8]{HK16}, this is exactly to say that $(f, \sigma)$ is a FEM-algebra for the dagger Frobenius monad $\mathcal{D}(A, t)$.

Finally, for a second morphism $(g, \gamma) : (A, 1) \longrightarrow (D, t)$ of dagger Frobenius monads, to give a $2$-cell $\alpha: (f, \sigma) \longrightarrow (g, \gamma)$ in $\mathsf{DFMnd}(\mathcal{D})$ is exactly to give a homomorphism $(f, \sigma) \longrightarrow (g, \gamma)$ of Eilenberg-Moore algebras for the monad $\mathcal{D}(A, t)$, by \cite[Lemma 6.7]{HK16}.
\end{proof}

\begin{definition}\cite{HK16}
A dagger $2$-category $\mathcal{D}$ admits the construction of Frobenius-Eilenberg-Moore algebras when the inclusion dagger $2$-functor $I: \mathcal{D} \longrightarrow \mathsf{DFMnd}(\mathcal{D})$ has a right $2$-adjoint, which is denoted $\mathsf{FEM}: \mathsf{DFMnd}(\mathcal{D}) \longrightarrow \mathcal{D}$.
\end{definition}

From Proposition \ref{prop2}, the following result is immediate.

\begin{theorem}\label{thm2}
A dagger $2$-category $\mathcal{D}$ admits the construction of Frobenius-Eilenberg-Moore algebras if and only if $\mathcal{D}$ has Frobenius-Eilenberg-Moore objects. In particular, to give a right adjoint to $I: \mathcal{D} \longrightarrow \mathsf{DFMnd}(\mathcal{D})$ is precisely to give a choice, for each dagger Frobenius monad in $\mathcal{D}$ of a Frobenius-Eilenberg-Moore-object.
\end{theorem}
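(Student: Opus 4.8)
The plan is to read the equivalence off directly from the two-variable isomorphism of Proposition \ref{prop2} together with the representability criterion for $2$-adjoints recorded in Section \ref{sec-Preliminaries}. The version stated there constructs left adjoints, so first I would invoke its dual: a dagger $2$-functor $L : \mathcal{C} \to \mathcal{D}$ admits a right $2$-adjoint exactly when, for each object $D$ of $\mathcal{D}$, the dagger $2$-functor $\mathcal{D}(L-, D) : \mathcal{C}^{\text{op}} \to \mathsf{DagCat}$ is representable. This is obtained by applying the criterion of the preliminaries to $L^{\text{op}} : \mathcal{C}^{\text{op}} \to \mathcal{D}^{\text{op}}$ — recalling that a right adjoint of $L$ is the opposite of a left adjoint of $L^{\text{op}}$ — and it guarantees moreover that the resulting right adjoint is again a dagger $2$-functor.

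I would apply this to $L = I : \mathcal{D} \to \mathsf{DFMnd}(\mathcal{D})$. Thus $I$ has a right $2$-adjoint if and only if, for every dagger Frobenius monad $(D, t)$ in $\mathcal{D}$, the dagger $2$-functor
$$\mathsf{DFMnd}(\mathcal{D})(I-, (D, t)) = \mathsf{DFMnd}(\mathcal{D})((-, 1), (D, t)) : \mathcal{D}^{\text{op}} \to \mathsf{DagCat}$$
is representable. Now Proposition \ref{prop2} supplies an isomorphism of dagger categories $\mathsf{DFMnd}(\mathcal{D})((A, 1), (D, t)) \cong \mathsf{FEM}(\mathcal{D}(A, D), \mathcal{D}(A, t))$ that is $2$-natural in $A$; equivalently, the two $2$-functors $\mathcal{D}^{\text{op}} \to \mathsf{DagCat}$ just named are isomorphic in $[\mathcal{D}^{\text{op}}, \mathsf{DagCat}]$. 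Since representability is invariant under such isomorphisms, the displayed functor is representable if and only if $\mathsf{FEM}(\mathcal{D}(-, D), \mathcal{D}(-, t))$ is — which, by Definition \ref{FEMobj}, is precisely the assertion that $(D, t)$ has a Frobenius-Eilenberg-Moore object.

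Chaining these equivalences gives the main claim: $I$ has a right $2$-adjoint if and only if every dagger Frobenius monad in $\mathcal{D}$ has a Frobenius-Eilenberg-Moore object, i.e. if and only if $\mathcal{D}$ has Frobenius-Eilenberg-Moore objects. For the refinement, I would track the representing objects through the construction of the adjoint: a representation of $\mathsf{DFMnd}(\mathcal{D})(I-, (D, t))$, transported along Proposition \ref{prop2}, is exactly a choice of representing object for $\mathsf{FEM}(\mathcal{D}(-, D), \mathcal{D}(-, t))$, that is a FEM-object $\mathsf{FEM}(D, t)$, and the standard construction assembles these choices into the object-assignment of the right adjoint. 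Since, as recorded in the preliminaries, $2$-adjunctions correspond bijectively to such $2$-natural families of representations, giving a right adjoint to $I$ is the same data as a choice of FEM-object for each dagger Frobenius monad.

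The only point needing care — a bookkeeping matter rather than a genuine difficulty — is keeping everything inside the dagger world: that $\mathcal{D}^{\text{op}}$ and $\mathsf{DFMnd}(\mathcal{D})^{\text{op}}$ are again dagger $2$-categories, that $I^{\text{op}}$ is a dagger $2$-functor, and that the isomorphism of Proposition \ref{prop2}, which preserves daggers, transports a representation to a representation through \emph{dagger} functors, so that the induced right adjoint is a dagger $2$-functor and not merely a $2$-functor. All of this is furnished by the dagger refinement of the representability criterion in Section \ref{sec-Preliminaries}, so the result is indeed immediate from Proposition \ref{prop2}.
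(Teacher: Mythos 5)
Your proposal is correct and is exactly the argument the paper intends: the paper simply declares the theorem ``immediate'' from Proposition \ref{prop2}, and your write-up fills in the same reasoning, namely the (dualised) representability criterion for $2$-adjoints from Section \ref{sec-Preliminaries} combined with the $2$-natural isomorphism of Proposition \ref{prop2} to identify representations of $\mathsf{DFMnd}(\mathcal{D})(I-,(D,t))$ with FEM-objects. No gaps; the dagger bookkeeping you flag at the end is handled just as you describe.
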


\noindent Theorems \ref{th1} and \ref{thm2} now give the following known result.

\begin{corollary}\cite[Theorem 7.5]{HK16}\label{hk16}
$\mathsf{DagCat}$ admits the construction of Frobenius-Eilenberg-Moore algebras.
\end{corollary}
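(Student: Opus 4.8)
The plan is to chain the two preceding theorems, since the statement asserts no more than their combination. By Theorem \ref{thm2}, it suffices to show that $\mathsf{DagCat}$ has Frobenius-Eilenberg-Moore objects in the sense of Definition \ref{FEMobj}; that is, that for every dagger Frobenius monad in $\mathsf{DagCat}$, the associated dagger $2$-functor $\mathsf{FEM}\big(\mathsf{DagCat}(-, \mathbf{D}), \mathsf{DagCat}(-, T)\big) : \mathsf{DagCat}^{\text{op}} \longrightarrow \mathsf{DagCat}$ is representable.

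First I would unwind, via the remark following Definition \ref{daggerFrobMonad}, that a dagger Frobenius monad in $\mathsf{DagCat}$ is precisely a dagger Frobenius monad $(T, \mu, \eta)$ on a dagger category $\mathbf{D}$. Fixing such a monad, the natural candidate for a representing object is the dagger category $\mathsf{FEM}(\mathbf{D}, T)$ of Frobenius-Eilenberg-Moore algebras for $T$.

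The key step is then to invoke Theorem \ref{th1} directly, which supplies, for every dagger category $\mathbf{A}$, an isomorphism of dagger categories
$$\mathsf{DagCat}\big(\mathbf{A}, \mathsf{FEM}(\mathbf{D}, T)\big) \cong \mathsf{FEM}\big(\mathsf{DagCat}(\mathbf{A}, \mathbf{D}), \mathsf{DagCat}(\mathbf{A}, T)\big)$$
that is $2$-natural in $\mathbf{A}$. This $2$-natural isomorphism is, by definition, precisely a representation of the dagger $2$-functor above, exhibiting $\mathsf{FEM}(\mathbf{D}, T)$ as its representing object. Hence every dagger Frobenius monad in $\mathsf{DagCat}$ has a Frobenius-Eilenberg-Moore object, so $\mathsf{DagCat}$ has Frobenius-Eilenberg-Moore objects, and Theorem \ref{thm2} then concludes that $\mathsf{DagCat}$ admits the construction of Frobenius-Eilenberg-Moore algebras.

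Since both theorems are already established, I expect no genuine obstacle here; the only thing worth verifying is the bookkeeping that the right-hand side of Theorem \ref{th1}, regarded as a dagger $2$-functor of $\mathbf{A}$, coincides with the functor named in Definition \ref{FEMobj} for the case $\mathcal{D} = \mathsf{DagCat}$, and that the $2$-naturality of the isomorphism in $\mathbf{A}$ is exactly the representability condition there. Both are immediate by inspection, so the corollary really is just the composite of the two theorems.
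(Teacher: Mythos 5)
Your proposal is correct and matches the paper exactly: the paper derives this corollary in one line by combining Theorem \ref{th1} (which exhibits $\mathsf{FEM}(\mathbf{D}, T)$ as the representing object) with Theorem \ref{thm2} (which translates representability into the existence of the right $2$-adjoint). The bookkeeping you flag is indeed immediate, and your write-up simply makes the paper's implicit one-liner explicit.
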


When a dagger Frobenius monad $(D, t)$ in $\mathcal{D}$ has a FEM-object, the dagger isomorphism (\ref{monadsToRep}) uniquely determines a morphism of dagger Frobenius monads $(u^t, \xi): \big(\mathsf{FEM}(D, t), 1\big)$ $\longrightarrow \big(D, t\big)$, in which we think of the $1$-cell $u^t$ as the ``forgetful'' $1$-cell. Moreover, if $\mathcal{D}$ further admits the construction of Frobenius-Eilenberg-Moore algebras, then the component of the counit of the $2$-adjunction evaluated at the dagger Frobenius monad $(D, t)$ is $(u^t, \xi)$. In particular, in the case that $\mathcal{D} = \mathsf{DagCat}$, the forgetful $1$-cell $U^T$ is of course the usual forgetful dagger functor $\mathsf{FEM}(\mathbf{D}, T) \longrightarrow \mathbf{D}$.

\cite{Str72} shows that much of the $1$-dimensional theory of monads can be described by several important universal properties in a $2$-dimensional context. We next show that in passing to the dagger context, there are corresponding universal properties.

\begin{lemma}\label{lem1}
For an adjunction $f \dashv u$ in a dagger $2$-category $\mathcal{D}$, the monad generated by the adjunction $f \dashv u$ is a dagger Frobenius monad.
\end{lemma}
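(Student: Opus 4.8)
The plan is to identify the generated monad explicitly and then verify the single Frobenius-law diagram of Definition~\ref{daggerFrobMonad}; by that definition this is all that needs checking, since a monad in a dagger $2$-category is automatically a comonad.

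First I would fix notation. Writing $\eta: 1 \longrightarrow uf$ and $\epsilon: fu \longrightarrow 1$ for the unit and counit of $f \dashv u$, the generated monad is $t = uf$ with unit $\eta$ and multiplication $\mu = u\epsilon f$. Since the dagger of a horizontal composite is the horizontal composite of the daggers, and identity $2$-cells are fixed by the dagger, the associated comonad comultiplication is $\mu^\dagger = u\epsilon^\dagger f$, where $\epsilon^\dagger: 1 \longrightarrow fu$. It is worth recording the conceptual reason this will work: daggering the two triangle identities for $f \dashv u$ shows that $u \dashv f$ is again an adjunction, with unit $\epsilon^\dagger$ and counit $\eta^\dagger$. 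Thus $t$ simultaneously carries the monad structure coming from $f \dashv u$ and the comonad structure coming from $u \dashv f$, the two being dagger-conjugate, and this ambidextrous situation is the classical source of Frobenius structure.

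The substance of the proof is the Frobenius-law diagram, that is, the equality $(\mu t)\cdot(t\mu^\dagger) = (t\mu)\cdot(\mu^\dagger t)$ of $2$-cells $t^2 \longrightarrow t^2$. I would prove it by showing that \emph{both} composites equal the single $2$-cell $\mu^\dagger\cdot\mu$. Expanding the left-hand side by whiskering gives $(u\epsilon fuf)\cdot(ufu\epsilon^\dagger f)$, in which the contracting cell $\epsilon$ and the expanding cell $\epsilon^\dagger$ are whiskered into disjoint segments of the word $ufufuf$: the $\epsilon$ contracts the $fu$ immediately after the leading $u$, while the $\epsilon^\dagger$ creates an $fu$ further to the right. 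By the interchange law these two cells may therefore be applied independently; performing the contraction first collapses the composite to ``contract the middle $fu$ of the input, then insert a fresh $fu$ in the output'', which is exactly $\mu^\dagger\cdot\mu$. The identical bookkeeping applied to $(t\mu)\cdot(\mu^\dagger t) = (ufu\epsilon f)\cdot(u\epsilon^\dagger fuf)$ again yields $\mu^\dagger\cdot\mu$, so the two sides agree.

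The main -- indeed essentially the only -- obstacle is this interchange step: one must check that the $\epsilon$-contraction and the $\epsilon^\dagger$-expansion genuinely occupy non-overlapping slots, so that no ``zigzag'' linking them is produced. This is precisely where the dagger is indispensable, since it supplies $\epsilon^\dagger$ in the correct slot, making $\mu$ and $\mu^\dagger$ factor through the object $D$ at the same place; the analogous equation fails for a general comonoid structure on $t$ in the monoidal hom-category $\mathcal{D}(A, A)$, and holds here only because of this common factorisation. To keep the argument rigorous rather than merely pictorial, I would justify the interchange by cutting the $1$-cell $ufufuf$ between its third and fourth letters and invoking functoriality of horizontal composition.
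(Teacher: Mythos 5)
Your proof is correct, but it takes a genuinely different route from the paper. The paper's proof is a two-line citation: it observes, exactly as in your motivational remark, that daggering the triangle identities turns $f \dashv u$ into an ambidextrous adjunction $u \dashv f$ with unit $\epsilon^\dagger$ and counit $\eta^\dagger$, and then invokes \cite[Corollary 2.22]{Lau}, which says that the monad of an ambidextrous adjunction is Frobenius. You instead verify the Frobenius law directly: writing $\mu = u\epsilon f$ and $\mu^\dagger = u\epsilon^\dagger f$ (the latter legitimate because the dagger preserves horizontal composition and fixes identity $2$-cells), you decompose $ufuf$ as $u \cdot (fu) \cdot 1 \cdot f$ for one side and $u \cdot 1 \cdot (fu) \cdot f$ for the other, so that the $\epsilon$-contraction and the $\epsilon^\dagger$-insertion occupy disjoint factors and the interchange law lets you reorder them; both composites then collapse to $\mu^\dagger \cdot \mu$. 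I checked the bookkeeping and it is sound. What your approach buys is self-containedness (no appeal to Lauda) and a slightly stronger conclusion, namely that both sides of the Frobenius square equal the middle composite $\mu^\dagger \cdot \mu$, not merely that they agree with each other; what it costs is the string-diagram-style case analysis, which the paper avoids entirely by outsourcing it. Either argument is acceptable here.
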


\begin{proof}
If $f \dashv u$ is an adjunction in a dagger $2$-category $\mathcal{D}$, with counit $\epsilon : fu \longrightarrow 1$ and unit $\eta: 1 \longrightarrow uf$, then we also have $u \dashv f$, with counit $\eta^\dagger: uf \longrightarrow 1$ and unit $\epsilon^\dagger: 1 \longrightarrow fu$. \cite[Corollary 2.22]{Lau} now says that the monad $(D, uf)$ generated by the adjunction $f \dashv u$ is a dagger Frobenius monad.
\end{proof}

Following this proposition we call $(D, uf)$ the \textit{dagger Frobenius monad generated by the adjunction $f \dashv u$}.

\begin{theorem}\cite[Theorem 7.4]{HK16}\label{universal1}
Every dagger Frobenius monad in a dagger $2$-category $\mathcal{D}$ having a Frobenius-Eilenberg-Moore object is generated by an adjunction.
\end{theorem}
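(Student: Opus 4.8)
The plan is to construct an explicit adjunction whose right leg is the forgetful $1$-cell $u^t : \mathsf{FEM}(D,t) \longrightarrow D$ already produced by the representation, and to transport the classical free/forgetful argument along the representing isomorphism. Write $E = \mathsf{FEM}(D,t)$ and let $\Phi_A : \mathcal{D}(A, E) \longrightarrow \mathsf{FEM}(\mathcal{D}(A,D), \mathcal{D}(A,t))$ denote the representing isomorphism, $2$-natural in $A$. Recall that $\Phi_E(1_E) = (u^t, \xi)$ is the universal FEM-algebra, and that naturality in $A$ identifies the forgetful functor $\mathsf{FEM}(\mathcal{D}(A,D), \mathcal{D}(A,t)) \longrightarrow \mathcal{D}(A,D)$ with post-composition $\mathcal{D}(A, u^t)$; in particular the underlying $1$-cell of $\Phi_A(h)$ is $u^t h$ for every $h : A \longrightarrow E$.

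First I would produce the free $1$-cell. The multiplication $\mu$ makes $(t, \mu)$ an Eilenberg-Moore algebra for the monad $\mathcal{D}(D, t)$ on $\mathcal{D}(D,D)$, and a direct check shows that its Frobenius-law diagram is precisely the commuting square $\mu t \cdot t\mu^\dagger = t\mu \cdot \mu^\dagger t$ defining the dagger Frobenius monad $(D,t)$. Hence $(t, \mu)$ is a FEM-algebra, and I set $f^t = \Phi_D^{-1}(t, \mu) : D \longrightarrow E$. Naturality of $\Phi$ under precomposition with an arbitrary $1$-cell $p : X \longrightarrow D$ then sends $\Phi_D(f^t) = (t, \mu)$ to $\Phi_X(f^t p) = (tp, \mu p)$, the free algebra on $p$; in particular $u^t f^t = t$, and reindexing the universal action $\xi$ along $f^t$ gives $\xi f^t = \mu$.

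Next I would establish the adjunction $f^t \dashv u^t$ through its hom-isomorphism. It suffices to exhibit, for every object $X$ and all $p : X \longrightarrow D$, $r : X \longrightarrow E$, a bijection $\mathcal{D}(X, E)(f^t p, r) \cong \mathcal{D}(X, D)(p, u^t r)$ natural in each variable, that is, an adjunction $\mathcal{D}(X, f^t) \dashv \mathcal{D}(X, u^t)$. Applying $\Phi_X$, the left-hand side becomes the set of FEM-algebra homomorphisms $(tp, \mu p) \longrightarrow \Phi_X(r)$; since the free algebra $(tp, \mu p)$ is again FEM by the same Frobenius computation as above, these coincide with ordinary Eilenberg-Moore homomorphisms, and the usual free/forgetful adjunction for the monad $\mathcal{D}(X, t)$ identifies them with maps $p \longrightarrow u^t r$ in $\mathcal{D}(X,D)$. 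This bijection is the one induced by $\eta : 1_D \longrightarrow t = u^t f^t$, so $\eta$ is the unit of $f^t \dashv u^t$.

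It remains to recognise the generated monad. Its underlying endo-$1$-cell is $u^t f^t = t$ and its unit is the adjunction unit $\eta$; its multiplication is $u^t \epsilon f^t$, where $\epsilon$ is the counit. Because $u^t \epsilon = \xi$ --- the universal algebra corresponds to $1_E$, whose counit has underlying map the action --- and $\xi f^t = \mu$ from the previous step, the multiplication is exactly $\mu$. Thus $f^t \dashv u^t$ generates $(D,t)$ with its original structure, which by Lemma \ref{lem1} is indeed a dagger Frobenius monad, consistent with the hypothesis. The main obstacle, and the only place the dagger Frobenius hypothesis is genuinely used, is verifying that the free algebras $(tp, \mu p)$ satisfy the Frobenius-law diagram, so that FEM-homomorphisms out of them reduce to ordinary Eilenberg-Moore homomorphisms; once this is in hand, the remainder is naturality bookkeeping transported along $\Phi$, exactly parallel to Street's formal theory of monads.
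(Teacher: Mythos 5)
Your proposal is correct and follows essentially the same route as the proof the paper relies on (it cites \cite[Theorem 7.4]{HK16} rather than reproving it): one takes $f^t$ to be the $1$-cell corresponding under the representing isomorphism to the free algebra $(t,\mu)$, observes that the Frobenius law of the monad is exactly the Frobenius law for $(t,\mu)$ and its whiskerings $(tp,\mu p)$, and transports the free/forgetful adjunction along the representation to obtain $f^t \dashv u^t$ with $u^t f^t = t$, $\xi f^t = \mu$ and $u^t\epsilon = \xi$, matching the data described in the paper immediately after the theorem. The only point left implicit is the standard fact that a $2$-natural family of adjunctions $\mathcal{D}(X,f^t)\dashv\mathcal{D}(X,u^t)$ yields an adjunction internal to $\mathcal{D}$, which is routine.
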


When a dagger Frobenius monad $(D, t)$ in a dagger $2$-category $\mathcal{D}$ has a FEM-object, the isomorphism of dagger categories
\begin{align}\label{isoOfDag}
    \mathcal{D}\big(A, \mathsf{FEM}(D, t)\big) \longrightarrow \mathsf{DFMnd}(\mathcal{D})\big((A, 1), (D, t)\big)
\end{align}
is defined by $f \longmapsto (u^tf, \xi f)$ on $1$-cells and $\sigma \longmapsto u^t\sigma$ on $2$-cells, for the unique morphism $(u^t, \xi): \big(\mathsf{FEM}(D, t), 1\big) \longrightarrow (D, t)$ of dagger Frobenius monads. The proof of Theorem \ref{universal1} shows that, for a dagger Frobenius monad $(D, t)$ in a dagger $2$-category $\mathcal{D}$, if $(D, t)$ has a FEM-object, there exists a unique $1$-cell $f^t : D \longrightarrow \mathsf{FEM}(D, t)$ such that $t = u^t f^t$ and $\mu = \xi f^t$, and a unique $2$-cell $\epsilon^t : f^t u^t \longrightarrow 1$ such that $u^t \epsilon^t = \xi$. Furthermore, $f^t$ is a left adjoint of $u^t$ and generates the dagger Frobenius monad $(D, t)$.

\begin{theorem}\label{universal2}
In the notation above, suppose the dagger Frobenius monad $(D, t)$ generated by the adjunction $f \dashv u$ has a Frobenius-Eilenberg-Moore object. Then, there exists a unique $1$-cell $n: A \longrightarrow \mathsf{FEM}(D, t)$ such that $u^t n = u$ and $u \epsilon = \xi n$, where $\epsilon$ is the counit of the adjunction $f \dashv u$. Moreover, this $n$ satisfies $nf = f^t$ and $n \epsilon = \epsilon^t n$.
\end{theorem}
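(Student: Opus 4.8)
The plan is to run everything through the representing isomorphism of dagger categories (\ref{isoOfDag}), namely $\mathcal{D}(A, \mathsf{FEM}(D,t)) \cong \mathsf{DFMnd}(\mathcal{D})((A,1),(D,t))$, which sends a $1$-cell $n$ to the morphism of dagger Frobenius monads $(u^t n, \xi n)$ and a $2$-cell $\sigma$ to $u^t\sigma$. Under this isomorphism, demanding a $1$-cell $n$ with $u^t n = u$ and $\xi n = u\epsilon$ is precisely demanding the $1$-cell that corresponds to the pair $(u, u\epsilon)$. Hence the first and main task is to verify that $(u, u\epsilon)\colon (A,1) \to (D,t)$, with structure $2$-cell $u\epsilon\colon tu = ufu \to u$, is a genuine morphism of dagger Frobenius monads; granting this, the existence and uniqueness of $n$ follow at once by defining $n$ to be its image under the inverse isomorphism.

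To show $(u, u\epsilon)$ is such a morphism I would check the diagrams (\ref{eq:morphOfMnd}) for the monads $(A,1)$ and $(D,t)$. The top-left and bottom diagrams say exactly that $(u, u\epsilon)$ is an Eilenberg--Moore algebra for $\mathcal{D}(A, t)$: the unit diagram $u\epsilon \cdot \eta u = 1_u$ is one of the triangle identities for $f \dashv u$, and the associativity diagram $u\epsilon \cdot ufu\epsilon = u\epsilon \cdot u\epsilon fu$ follows from the middle-four interchange law together with $\mu = u\epsilon f$. By Proposition \ref{prop2} it then remains to verify the Frobenius condition, i.e.\ that $(u\epsilon)^\dagger = u\epsilon^\dagger$ is an Eilenberg--Moore homomorphism $(u, u\epsilon) \to (tu, \mu u)$. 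This is the step I expect to be the crux: after whiskering off $u$ it reduces to the identity $\epsilon * \epsilon^\dagger = \epsilon^\dagger * \epsilon$ of $2$-cells $fu \to fu$. A priori one might fear this requires the Frobenius law of the monad (Lemma \ref{lem1}), but for a counit $\epsilon\colon fu \to 1$ and its mate $\epsilon^\dagger\colon 1 \to fu$ both horizontal composites in fact equal the vertical composite $\epsilon^\dagger \cdot \epsilon$, again purely by interchange, so the condition holds in any dagger $2$-category.

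With $n$ secured, the two ``moreover'' equalities are transported across the same isomorphism. For $nf = f^t$ I would apply (\ref{isoOfDag}) at the object $D$: there $nf \mapsto (u^t nf, \xi nf) = (uf, u\epsilon f) = (t, \mu)$, using $u^t n = u$, $\xi n = u\epsilon$ and $\mu = u\epsilon f$, while $f^t \mapsto (u^t f^t, \xi f^t) = (t, \mu)$ by the defining properties of $f^t$ recalled before the theorem; injectivity of the isomorphism on $1$-cells gives $nf = f^t$. For $n\epsilon = \epsilon^t n$, note first that both are $2$-cells $f^t u \to n$ (using $nf = f^t$ on the left and $u^t n = u$ on the right); applying $u^t$, which is the action of (\ref{isoOfDag}) on $2$-cells, yields $u^t(n\epsilon) = (u^t n)\epsilon = u\epsilon$ and $u^t(\epsilon^t n) = (u^t \epsilon^t)n = \xi n = u\epsilon$, where $u^t\epsilon^t = \xi$. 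Since the isomorphism is in particular faithful, $n\epsilon = \epsilon^t n$. Thus the only genuine work lies in the Frobenius verification of the second paragraph, everything else being a matter of moving equalities across the representing isomorphism.
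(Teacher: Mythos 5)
Your proposal is correct and follows essentially the same route as the paper: both reduce the theorem to checking that $(u, u\epsilon)\colon (A,1)\to (D,t)$ is a morphism of dagger Frobenius monads, with the crux being the interchange-law identity $fu\epsilon\cdot\epsilon^\dagger fu = \epsilon^\dagger\cdot\epsilon = \epsilon fu\cdot fu\epsilon^\dagger$, and then obtain $n$ and the two ``moreover'' equalities by transporting everything across the representing isomorphism (\ref{isoOfDag}) and using its faithfulness. The only cosmetic difference is that you phrase the Frobenius check via Proposition \ref{prop2} (as $u\epsilon^\dagger$ being an Eilenberg--Moore homomorphism) while the paper checks the top-right diagram of (\ref{eq:morphOfMnd}) directly; these are equivalent.
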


\begin{proof}
One easily verifies that $(u, u \epsilon) : (A, 1) \longrightarrow (D, t)$ is a morphism of monads. It remains to verify that it is a morphism of dagger Frobenius monads. From the top-right diagram of (\ref{eq:morphOfMnd}), $(u, u\epsilon)$ is a morphism of dagger Frobenius monads if and only if 
\begin{align*}
u(\epsilon^\dagger \cdot \epsilon) = u\epsilon^\dagger \cdot u \epsilon = u\epsilon fu \cdot ufu\epsilon^\dagger = u(\epsilon fu \cdot fu\epsilon^\dagger)
\end{align*}
But, a straightforward application of the interchange law gives the equalities
\begin{align*}
     fu\epsilon \cdot \epsilon^\dagger fu = \epsilon^\dagger \cdot \epsilon = \epsilon fu \cdot fu \epsilon^\dagger
\end{align*}
And so, $(u, u\epsilon)$ is indeed a morphism of dagger Frobenius monads. The rest of the proof proceeds identically to the similar proof in \cite{Str72}. Since $(u, u\epsilon) : (A, 1) \longrightarrow (D, t)$ is a morphism of dagger Frobenius monads, there exists a unique $1$-cell $n: A \longrightarrow \mathsf{FEM}(D, t)$ such that the diagram
\[\bfig
    \node A(0,0)[(D, t)]
    \node B(-500,500)[(A, 1)]
    \node C(500,500)[\big(\mathsf{FEM}(D, t), 1 \big)]
    \arrow/->/[B`C;(n, 1)]
    \arrow|b|/->/[B`A;(u, u\epsilon)]
    \arrow|r|/->/[C`A;(u^t, \xi)]
\efig\]
commutes. Therefore, $u^t n = u$ and $u \epsilon = \xi n$, so that $u^t(n \epsilon) = u \epsilon = \xi n = u^t(\epsilon^t n)$. Therefore, by the dagger isomorphism (\ref{isoOfDag}), we have $n \epsilon = \epsilon^t n$. Finally,
\begin{align*}
    u^t(nf) = uf = t \\
    \xi(nf) = u \epsilon f = \mu
\end{align*}
By the property which uniquely determines $f^t$, we have $f^t = nf$.
\end{proof}

Since $\mathsf{DagCat}$ admits the construction of Frobenius-Eilenberg-Moore algebras, the following result is immediate.

\begin{corollary}\cite[Theorem 6.9]{HK16}
Suppose $F$ and $U$ are dagger adjoints between dagger categories $\mathbf{A}$ and $\mathbf{D}$, with $T$ the dagger Frobenius monad generated by $F \dashv U$. Then, there exists a unique dagger functor $N: \mathbf{A} \longrightarrow \mathsf{FEM}\big(\mathbf{D}, T\big)$ such that $U^T N = U$ and $NF = F^T$.
\end{corollary}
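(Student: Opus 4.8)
The plan is to read this corollary off from Theorem \ref{universal2} by specialising to the dagger $2$-category $\mathcal{D} = \mathsf{DagCat}$. First I would check that the hypotheses of that theorem are in force. By Corollary \ref{hk16} together with Theorem \ref{thm2}, $\mathsf{DagCat}$ has Frobenius-Eilenberg-Moore objects, so in particular the dagger Frobenius monad $T$ generated by $F \dashv U$ (which is dagger Frobenius by Lemma \ref{lem1}, as the statement already assumes) has a FEM-object, namely $\mathsf{FEM}(\mathbf{D}, T)$, equipped with its forgetful dagger functor $U^T$ and canonical structure $2$-cell $\xi$ (here $\xi$ evaluated at an FEM-algebra is simply its structure map). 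Identifying $A = \mathbf{A}$, $D = \mathbf{D}$, $f = F$, $u = U$, $t = T$, the adjunction $F \dashv U$ and its monad $T$ are exactly the data to which Theorem \ref{universal2} applies, with $\epsilon\colon FU \longrightarrow 1_{\mathbf{A}}$ the counit.

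Applying Theorem \ref{universal2} then produces a $1$-cell $N := n\colon \mathbf{A} \longrightarrow \mathsf{FEM}(\mathbf{D}, T)$ with $U^T N = U$ and $U\epsilon = \xi N$; since the $1$-cells of $\mathsf{DagCat}$ are by definition dagger functors, $N$ is automatically a dagger functor, which accounts for the corresponding clause of the corollary for free. The ``moreover'' part of Theorem \ref{universal2} supplies the remaining identity $NF = F^T$ directly (this is the relation $nf = f^t$). Thus existence is settled, and the $N$ so obtained satisfies both equations $U^T N = U$ and $NF = F^T$ demanded by the corollary.

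The one point genuinely needing care — and the main obstacle — is that the corollary characterises $N$ by the pair $U^T N = U$ and $NF = F^T$, whereas Theorem \ref{universal2} pins $n$ down by $U^T N = U$ and $U\epsilon = \xi N$, so the two uniqueness clauses are a priori different. To reconcile them I would show that any dagger functor $N$ with $U^T N = U$ and $NF = F^T$ must satisfy $U\epsilon = \xi N$, after which the uniqueness of Theorem \ref{universal2} (equivalently the dagger isomorphism (\ref{isoOfDag})) finishes the argument. The key observation is that $N\epsilon$ is a homomorphism of FEM-algebras $F^T U = NFU \longrightarrow N$ whose underlying natural transformation is $U^T(N\epsilon) = U\epsilon$; unpacking the homomorphism condition against the free structure $\mu U$ on $F^T U$ gives $\xi N \cdot T(U\epsilon) = U\epsilon \cdot \mu U$. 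Next, $T$-whiskering the triangle identity $U\epsilon \cdot \eta U = 1_U$ exhibits $T\eta U$ as a section of $T(U\epsilon)$; precomposing the previous equation with $T\eta U$ and invoking the monad unit law $\mu \cdot T\eta = 1_T$ collapses it to $\xi N = U\epsilon$, as required. This is a short diagram chase rather than a real difficulty, which is why the corollary is immediate once Theorem \ref{universal2} and the fact that $\mathsf{DagCat}$ admits the construction of Frobenius-Eilenberg-Moore algebras are in hand.
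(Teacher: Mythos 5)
Your proposal is correct and follows exactly the route the paper intends: the paper's entire proof is the remark that the result is ``immediate'' from Theorem \ref{universal2} once one knows $\mathsf{DagCat}$ admits the construction of Frobenius-Eilenberg-Moore algebras, which is precisely your specialisation $\mathcal{D} = \mathsf{DagCat}$. Your extra step reconciling the two uniqueness clauses ($U\epsilon = \xi N$ versus $NF = F^T$) via the triangle identity is a correct filling-in of a detail the paper leaves implicit.
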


\begin{definition}
The unique $1$-cell $n : A \longrightarrow \mathsf{FEM}(D, t)$ of Theorem \ref{universal2} is called the right comparison $1$-cell of the adjunction $f \dashv u$. If this $1$-cell is a dagger equivalence (that is, there is a $1$-cell $m : \mathsf{FEM}(D, t) \longrightarrow A$, and $2$-cell unitaries $nm \cong 1$ and $1 \cong mn$), then the adjunction $f \dashv u$ is said to be monadic.
\end{definition}

Note that $2$-functors between $2$-categories send adjunctions to adjunctions. The formulation of Frobenius-Eilenberg-Moore objects as representing objects for a representable dagger $2$-functor in the previous section now gives our final important result, whose proof is identical to that of \cite[Corollary 8.1]{Str72}.

\begin{corollary}
Suppose the dagger Frobenius monad generated by an adjunction $f \dashv u$ in a dagger $2$-category $\mathcal{D}$ has a Frobenius-Eilenberg-Moore object. The adjunction $f \dashv u$ is monadic if and only if, for each object $X$ of $\mathcal{D}$, the adjunction $\mathcal{D}(X, f) \dashv \mathcal{D}(X, u)$ in $\mathsf{DagCat}$ is monadic.
\end{corollary}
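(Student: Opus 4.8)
The plan is to reduce the statement to the representable image of the comparison $1$-cell $n : A \longrightarrow \mathsf{FEM}(D, t)$ supplied by Theorem \ref{universal2}, and then to prove a dagger-enhanced version of the principle that a $1$-cell is an equivalence exactly when it is so representably. First I would observe that, for each object $X$ of $\mathcal{D}$, the representable dagger $2$-functor $\mathcal{D}(X, -) : \mathcal{D} \longrightarrow \mathsf{DagCat}$ carries the adjunction $f \dashv u$ to $\mathcal{D}(X, f) \dashv \mathcal{D}(X, u)$, whose generated dagger Frobenius monad is $\mathcal{D}(X, t)$ on $\mathcal{D}(X, D)$. Since $(D, t)$ has a FEM-object, Definition \ref{FEMobj} supplies a $2$-natural isomorphism of dagger categories $\mathcal{D}(X, \mathsf{FEM}(D, t)) \cong \mathsf{FEM}(\mathcal{D}(X, D), \mathcal{D}(X, t))$, under which the representable $\mathcal{D}(X, u^t)$ becomes the forgetful dagger functor.

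The key step is to identify, under this isomorphism, the functor $\mathcal{D}(X, n)$ with the comparison dagger functor $N_X$ of the adjunction $\mathcal{D}(X, f) \dashv \mathcal{D}(X, u)$. To do this I would apply $\mathcal{D}(X, -)$ to the defining equations $u^t n = u$ and $u\epsilon = \xi n$ of Theorem \ref{universal2}; the resulting equations say precisely that $\mathcal{D}(X, n)$ satisfies the universal property that uniquely characterises $N_X$, namely Theorem \ref{universal2} read inside $\mathsf{DagCat}$. By that uniqueness, $\mathcal{D}(X, n) \cong N_X$. Consequently $\mathcal{D}(X, f) \dashv \mathcal{D}(X, u)$ is monadic if and only if $\mathcal{D}(X, n)$ is a dagger equivalence, while $f \dashv u$ is monadic if and only if $n$ itself is a dagger equivalence. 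The corollary thus reduces to the claim that $n$ is a dagger equivalence if and only if $\mathcal{D}(X, n)$ is a dagger equivalence for every object $X$.

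For the forward implication I would note that a dagger $2$-functor preserves dagger equivalences: it sends a pseudo-inverse together with its unitary $2$-cells to a pseudo-inverse with unitary $2$-cells, since it commutes with the dagger on $2$-cells and therefore preserves unitarity. For the converse — the genuinely dagger-flavoured part — I would run a Yoneda-style argument with unitarity control. Taking $X = \mathsf{FEM}(D, t)$, the fact that $\mathcal{D}(X, n)$ is unitarily essentially surjective produces a $1$-cell $m : \mathsf{FEM}(D, t) \longrightarrow A$ together with a unitary $nm \cong 1$. Taking $X = A$ and using that $\mathcal{D}(A, n)$ is fully faithful, I would lift the whiskered unitary $(nm)n \cong n$ to a $2$-cell $\beta : mn \longrightarrow 1$. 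The point to check carefully is that $\beta$ is again unitary: a fully faithful dagger functor reflects unitaries, since it is faithful and commutes with $\dagger$, so applying $\mathcal{D}(A, n)$ to the identities $\beta\beta^\dagger = 1$ and $\beta^\dagger\beta = 1$ and invoking faithfulness recovers them. This yields the two unitary $2$-cells witnessing that $n$ is a dagger equivalence.

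The main obstacle I anticipate is exactly this reflection-of-unitaries step: in the non-dagger setting of \cite[Corollary 8.1]{Str72} one needs only invertible $2$-cells, whereas here one must verify that the pseudo-inverse and both comparison $2$-cells can be chosen unitary. Everything rests on the single observation that fully faithful dagger functors both preserve and reflect unitaries, which is what lets the representable argument run verbatim in the dagger context; the identification $\mathcal{D}(X, n) \cong N_X$ through the uniqueness clause of Theorem \ref{universal2} is otherwise routine.
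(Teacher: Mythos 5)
Your proposal is correct and takes essentially the same approach as the paper, which gives no explicit argument beyond invoking the representable formulation of FEM-objects and declaring the proof ``identical to that of \cite[Corollary 8.1]{Str72}'' --- precisely the Yoneda-style detection of equivalences via the comparison $1$-cell $n$ that you carry out. The only material you add is the dagger-specific bookkeeping (dagger $2$-functors preserve unitaries; fully faithful dagger functors reflect them) that the paper leaves implicit, and this is verified correctly.
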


\section{Free completions under FEM-objects}

Dually, we define \textit{Frobenius-Kleisli objects} for dagger Frobenius monads in a dagger $2$-category.

\begin{definition}
A Frobenius-Kleisli object for a dagger Frobenius monad $(D, t)$ in a dagger $2$-category $\mathcal{D}$ is a Frobenius-Eilenberg-Moore object for $(D, t)$ considered as a dagger Frobenius monad in $\mathcal{D}^{\text{op}}$. A Frobenius-Kleisli object for $(D, t)$, when it exists, is denoted by $\mathsf{FK}(D, t)$, and in particular satisfies, for each object $X$ in $\mathcal{D}$, the following isomorphism of dagger categories
\begin{align*}
    \mathcal{D}\big(\mathsf{FK}(D, t), X\big) \cong \mathsf{FEM}\big(\mathcal{D}(D, X), \mathcal{D}(t, X)\big)
\end{align*}
$2$-natural in each of the arguments. $\mathcal{D}$ is said to have Frobenius-Kleisli objects if every dagger Frobenius monad in $\mathcal{D}$ has a Frobenius-Kleisli object.
\end{definition}

From \cite[Lemma 6.1]{HK16} we know that the Kleisli category $\mathbf{D}_{T}$ for a dagger Frobenius monad $(T, \mu, \eta)$ on a dagger category $\mathbf{D}$ carries a canonical dagger structure, given by
\begin{align}{\label{daggerOnFK}}
    \big(f: C \longrightarrow TD\big) &\longmapsto \big(T(f^\dagger)\mu^\dagger_D \eta_D : D \longrightarrow TC\big)
\end{align}
which commutes with the canonical dagger functors $\mathbf{D} \longrightarrow \mathbf{D}_{T}$ and $\mathbf{D}_{T} \longrightarrow \mathbf{D}$.
In fact, this makes $\mathbf{D}_{T}$ a Frobenius-Kleisli object for $(\mathbf{D}, T)$.

\begin{theorem}\label{DagCatHasFK}
Each dagger Frobenius monad $T = (T, \mu, \eta)$ on a dagger category $\mathbf{D}$ has a Frobenius-Kleisli object, which is the Kleisli category $\mathbf{D}_{T}$ of the monad $T$.
\end{theorem}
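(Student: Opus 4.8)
The plan is to establish directly the universal property that defines $\mathsf{FK}(\mathbf{D}, T)$. Unwinding the definition of a Frobenius-Kleisli object (an FEM-object in $\mathsf{DagCat}^{\mathrm{op}}$) to the hom-isomorphism stated there, it suffices to produce, for each dagger category $\mathbf{X}$, an isomorphism of dagger categories $\mathsf{DagCat}(\mathbf{D}_T, \mathbf{X}) \cong \mathsf{FEM}\big(\mathsf{DagCat}(\mathbf{D}, \mathbf{X}), \mathsf{DagCat}(T, \mathbf{X})\big)$, $2$-natural in $\mathbf{X}$. Here $\mathsf{DagCat}(T, \mathbf{X})$ is the dagger Frobenius monad on $\mathsf{DagCat}(\mathbf{D}, \mathbf{X})$ given by precomposition $G \mapsto GT$, with unit $G\eta$ and multiplication $G\mu$; its Eilenberg--Moore algebras are pairs $(G, \theta)$ consisting of a dagger functor $G \colon \mathbf{D} \to \mathbf{X}$ and a natural transformation $\theta \colon GT \to G$ satisfying $\theta\circ G\eta = \mathrm{id}$ and $\theta\circ G\mu = \theta\circ\theta T$.

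I would start from the classical universal property of the Kleisli category. Writing $F_T \colon \mathbf{D} \to \mathbf{D}_T$ for the canonical functor and $\kappa_D \colon TD \to D$ for the Kleisli arrow represented by $\mathrm{id}_{TD}$, an ordinary functor $H \colon \mathbf{D}_T \to \mathbf{X}$ corresponds bijectively to an Eilenberg--Moore algebra for the precomposition monad, via $G = HF_T$ and $\theta_D = H(\kappa_D)$, with inverse sending $(G, \theta)$ to the functor acting by $f \mapsto \theta_D\circ G(f)$ on a Kleisli arrow $f \colon C \to TD$; natural transformations correspond likewise to homomorphisms of algebras. Since $F_T$ is a dagger functor, $G = HF_T$ is automatically a dagger functor whenever $H$ is, so on objects this correspondence restricts to dagger functors on the one side and to Eilenberg--Moore algebras with dagger underlying functor on the other.

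It then remains to show that, under this correspondence, $H$ preserves daggers if and only if $(G, \theta)$ satisfies the Frobenius law. Because daggers are contravariantly functorial and every morphism of $\mathbf{D}_T$ is a composite of arrows $\kappa_D$ and arrows in the image of $F_T$, preservation of daggers by $H$ is closed under composition and hence need only be tested on these generators; on the image of $F_T$ it is automatic from $G$ and $F_T$ being dagger functors, so the whole condition collapses to the single family of equations coming from the $\kappa_D$. Computing $\kappa_D^\dagger$ from the explicit Kleisli dagger (\ref{daggerOnFK}) gives the $\mathbf{D}$-morphism $\mu^\dagger_D\eta_D \colon D \to TTD$, and the correspondence turns the demand $H(\kappa_D^\dagger) = H(\kappa_D)^\dagger$ into $\theta_{TD}\circ G(\mu^\dagger_D)\circ G(\eta_D) = \theta^\dagger_D$ for every $D$. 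One implication is short: assuming the Frobenius law in the form $\theta_{TD}\circ G(\mu^\dagger_D) = G(\mu_D)\circ\theta^\dagger_{TD}$, naturality of $\theta^\dagger$ at $\eta_D$ together with the monad identity $\mu_D\circ T\eta_D = \mathrm{id}$ yields exactly this equation, so that every FEM-algebra produces a dagger functor $H$.

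The reverse implication is the main obstacle, since the displayed $\kappa_D$-equation relates components of $\theta$ and $\theta^\dagger$ at $D$ and at $TD$, whereas the Frobenius law is a relation purely among components at $TD$. To bridge this I would use the characterization of \cite[Lemma 6.8]{HK16}, by which $(G, \theta)$ is an FEM-algebra precisely when $\theta^\dagger \colon (G, \theta) \to (GT, G\mu)$ is a homomorphism of Eilenberg--Moore algebras for $\mathsf{DagCat}(T, \mathbf{X})$; translating the $\kappa_D$-equation through the naturality of $\theta$ and $\theta^\dagger$, the unit laws, and the Frobenius law of $T$ itself (which is what makes the Kleisli dagger (\ref{daggerOnFK}) well defined) should reproduce exactly this homomorphism condition, and I expect this algebraic equivalence to be the delicate part of the argument. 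With the object- and morphism-level correspondences established, the resulting bijection is functorial and preserves daggers, since the dagger on each of $\mathsf{DagCat}(-, \mathbf{X})$ and $\mathsf{FEM}(\cdots)$ is computed pointwise from that of $\mathbf{X}$; its $2$-naturality in $\mathbf{X}$ is routine, following from the functoriality in $\mathbf{X}$ of all the constructions, and completes the identification of $\mathbf{D}_T$ with $\mathsf{FK}(\mathbf{D}, T)$.
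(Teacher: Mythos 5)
Your overall strategy coincides with the paper's: both arguments establish the defining isomorphism $\mathsf{DagCat}(\mathbf{D}_{T},\mathbf{X})\cong\mathsf{FEM}\big(\mathsf{DagCat}(\mathbf{D},\mathbf{X}),\mathsf{DagCat}(T,\mathbf{X})\big)$ directly, by matching functors out of the Kleisli category with algebras for the precomposition monad, and your forward implication (Frobenius law of the algebra, plus naturality of $\theta^{\dagger}$ at $\eta_{D}$, plus $\mu_{D}\circ T\eta_{D}=\mathrm{id}$) is literally the paper's computation of $\overline{S}(f^{\dagger})$ specialised to $f=\mathrm{id}_{TD}$. Your reduction of dagger-preservation to the generators $\kappa_{D}$ is sound --- every Kleisli arrow factors as $\kappa_{D}\circ F_{T}(f)$, $F_{T}$ commutes with the daggers, and dagger-preservation is closed under composition --- and is a tidy way to organise that computation.

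The one genuine issue is the reverse implication, which you leave as a sketch (``should reproduce exactly this homomorphism condition \dots the delicate part''). As written this is a gap: you have not shown that a dagger functor $H$ produces an algebra satisfying the Frobenius law, and that is precisely the direction needed for the comparison functor to land in $\mathsf{FEM}$. The step does close along the lines you indicate: from the $\kappa_{D}$-equation $\theta^{\dagger}_{D}=\theta_{TD}\circ G(\mu^{\dagger}_{D}\eta_{D})$, naturality of $\theta$, the associativity law $\theta_{TD}\circ\theta_{TTD}=\theta_{TD}\circ G(\mu_{TD})$, the Frobenius law of $T$ and naturality of $\mu^{\dagger}$ give $\theta^{\dagger}_{D}\circ\theta_{D}=\theta_{TD}\circ G(\mu^{\dagger}_{D})$; since the left-hand side is its own dagger, applying $\dagger$ also gives $\theta^{\dagger}_{D}\circ\theta_{D}=G(\mu_{D})\circ\theta^{\dagger}_{TD}$, and the two equalities together are exactly (\ref{FEM-KL}) (equivalently the condition of \cite[Lemma 6.8]{HK16}). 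You should supply this chain explicitly if you keep the generators-and-relations route. Note, however, that the paper sidesteps the computation entirely: it models the Kleisli category by the full image $F^{T}(\mathbf{D})\subseteq\mathsf{FEM}(\mathbf{D},T)$ of the free functor, observes that each free algebra $(TD,\mu_{D})$ is an FEM-algebra because its Frobenius square is precisely the Frobenius law of $T$ at $D$, and then notes that a dagger functor $S'$ carries that commuting square to the Frobenius square of $(S'F^{T},S'\mu)$. That observation is the shorter path for this direction, and is worth adopting even if you retain your presentation of the rest.
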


\begin{proof}
Let $F^{T}: \mathbf{D} \longrightarrow \mathsf{FEM}(\mathbf{D}, T)$ and $F_{T} : \mathbf{D} \longrightarrow \mathbf{D}_{T}$ denote the canonical free (dagger) functors. For a dagger category $\mathbf{X}$ and a dagger functor $S^\prime : F^T(\mathbf{D}) \longrightarrow \mathbf{X}$, the pair $(S^\prime F^T, S^\prime \mu)$ is a Frobenius-Eilenberg-Moore algebra for the dagger Frobenius monad $\mathsf{DagCat}(T, \mathbf{X})$ on the dagger category $\mathsf{DagCat}(\mathbf{D}, \mathbf{X})$. Indeed, since for each object $D$ in $\mathbf{D}$, $F^T(D) = \big(T(D), \mu_D\big)$ is an Eilenberg-Moore algebra for the monad $T$, $(S^\prime F^T, S^\prime \mu)$ is surely an Eilenberg-Moore algebra for $\mathsf{DagCat}(T, \mathbf{X})$. Furthermore, since $T$ is a dagger Frobenius monad, $(S^\prime F^T, S^\prime \mu)$ is additionally a Frobenius-Eilenberg-Moore algebra. Sending a natural transformation $\alpha: S^\prime \longrightarrow S^{\prime\prime} : F^T(\mathbf{D}) \longrightarrow \mathbf{X}$ to the homomorphism $\alpha F^T : S^{\prime}F^T \longrightarrow S^{\prime\prime}F^T$ of Eilenberg-Moore algebras then determines a dagger functor
\begin{align}\label{dagger-iso-left}
    \mathsf{DagCat}\big(F^T(\mathbf{D}), \mathbf{X}\big) \longrightarrow \mathsf{FEM}\big(\mathsf{DagCat}(\mathbf{D}, \mathbf{X}), \mathsf{DagCat}(T, \mathbf{X})\big)
\end{align}
On the other hand, if $(S, \phi)$ is a Frobenius-Eilenberg-Moore algebra for the dagger Frobenius monad $\mathsf{DagCat}(T, \mathbf{X})$, the mappings
\begin{align*}
    D &\longmapsto SD,\\
    \big(f: C \longrightarrow TD\big) &\longmapsto \big(\phi_D Sf : SC \longrightarrow SD\big)
\end{align*}
yield a dagger functor $\overline{S}: \mathbf{D}_{T} \longrightarrow \mathbf{X}$. For, given morphisms $g: B \longrightarrow TC$ and $f: C \longrightarrow TD$ in $\mathbf{D}$, the composite morphism $f\cdot g$ in $\mathbf{D}_{T}$ is given by the morphism $\mu_D T(f) g: B \longrightarrow TD$ in $\mathbf{D}$, and so
\begin{align*}
    \overline{S}(f \cdot g) & = \phi_D S(\mu_D) ST(f) S(g) \\
    & = \phi_D \phi_{T(C)} ST(f) S(g) \\
    & = \phi_D S(f)\phi_C S(g) = \overline{S}(f) \overline{S} (g)
\end{align*}
where the second equality follows by definition of $(S, \phi)$ being an Eilenberg-Moore algebra, and the third equality by the fact that $\phi: ST \longrightarrow S$ is a natural transformation. Furthermore, since $\eta_D: D \longrightarrow TD$ in $\mathbf{D}$ is the identity morphism $D \longrightarrow D$ in $\mathbf{D}_{T}$,
\begin{align*}
    \overline{S}(1_D) = \phi_D S(\eta_D) = 1_{SD}
\end{align*}
again by definition of $(S, \phi)$ being an Eilenberg-Moore algebra, and so $\overline{S}$ is indeed a functor. Finally, note that since $(S, \phi)$ is Frobenius-Eilenberg-Moore algebra for $\mathsf{DagCat}(T, \mathbf{X})$, one has that for each $D$ in $\mathbf{D}$,
\begin{align}\label{FEM-KL}
    \phi_{TD} S(\mu^\dagger_D) = S(\mu_D) \phi^\dagger_{TD}
\end{align}
Therefore, for $f: C \longrightarrow TD$ in $\mathbf{D}$,
\begin{align*}
    \overline{S}(f^\dagger) &= \phi_C ST(f^\dagger) S(\mu^\dagger_D) S(\eta_D) \\
    &= S(f^\dagger) \phi_{TD} S(\mu^\dagger_D) S(\eta_D) \\
    &\stackrel{(\ref{FEM-KL})}{=} S(f^\dagger) S(\mu_D) \phi^\dagger_{TD} S(\eta_D) \\
    &= S(f^\dagger) S(\mu_D) ST(\eta_D) \phi^\dagger_D \\
    &= S(f^\dagger) \phi^\dagger_D \\
    &= \big(\phi_D S(f)\big)^\dagger = \overline{S}(f)^\dagger
\end{align*}
Sending a homomorphism of Eilenberg-Moore algebras $\psi: (P, \rho) \longrightarrow (S, \phi)$ to its underlying natural transformation $\psi: \overline{P} \longrightarrow \overline{S}$ now determines a dagger functor
\begin{align}\label{dagger-iso-right}
    \mathsf{FEM}\big(\mathsf{DagCat}(\mathbf{D}, \mathbf{X}), \mathsf{DagCat}(T, \mathbf{X})\big) \longrightarrow \mathsf{DagCat}\big(\mathbf{D}_{T}, \mathbf{X}\big)
\end{align}
These two dagger functors (\ref{dagger-iso-left}) and (\ref{dagger-iso-right}) determine an isomorphism of dagger categories
\begin{align*}
    \mathsf{DagCat}\big(\mathbf{D}_{T}, \mathbf{X}\big) \cong \mathsf{FEM}\big(\mathsf{DagCat}(\mathbf{D}, \mathbf{X}), \mathsf{DagCat}(T, \mathbf{X})\big)
\end{align*}
$2$-natural in the arguments.
\end{proof}

Our next contribution is to explicitly construct the \textit{free completion} under Frobenius-Eilenberg–Moore objects of a dagger $2$-category $\mathcal{D}$. What we mean by this free completion will be clear from Theorem \ref{freeCompletionFEM} below. Informally, however, it will manifest a `dagger-enriched' version of the closure $\overline{\mathcal{K}}$ of a $2$-category $\mathcal{K}$ in $[\mathcal{K}^{\text{op}}, \mathsf{Cat}]$ under Eilenberg-Moore objects, as detailed in \cite[Section 4]{Str76}. Rather than attempting to extend to the dagger context the sophisticated machinery of \cite{Str76}, we more directly approach the current situation, while equally following very closely the similar argument in \cite{LS02}.

Given a dagger $2$-category $\mathcal{D}$, each dagger Frobenius monad $(F, \phi)$ in $[\mathcal{D}^\text{op}, \mathsf{DagCat}]$ has a Frobenius-Kleisli object, which we denote $\mathsf{FK}(F, \phi)$. Indeed, Theorem \ref{DagCatHasFK} shows that there exists a dagger $2$-functor $\mathsf{FK}: \mathsf{DFMnd}(\mathsf{DagCat}) \longrightarrow \mathsf{DagCat}$ -- which is in fact a left $2$-adjoint of the inclusion dagger $2$-functor $I: \mathsf{DagCat} \longrightarrow \mathsf{DFMnd}(\mathsf{DagCat})$ -- and so one constructs a dagger $2$-functor $\mathsf{FK}(F, \phi): \mathcal{D}^\text{op} \longrightarrow \mathsf{DagCat}$ in the obvious fashion of specifying $0$-cells, $1$-cells and $2$-cells in $\mathsf{DFMnd}(\mathsf{DagCat})$ determined by the pointwise values of $(F, \phi)$, and then taking their images under the dagger $2$-functor $\mathsf{FK}: \mathsf{DFMnd}(\mathsf{DagCat}) \longrightarrow \mathsf{DagCat}$. Finally, since we now have, for each $D$ in $\mathcal{D}$, a $2$-natural isomorphism of dagger categories
\begin{align*}
    \mathsf{DagCat}\big(\mathsf{FK}(FD, \phi_D), SD\big) \cong \mathsf{FEM}\big(\mathsf{DagCat}(FD, SD), \mathsf{DagCat}(\phi_D, SD)\big)
\end{align*}
we surely have a $2$-natural isomorphism of dagger categories
\begin{align*}
    [\mathcal{D}^\text{op}, \mathsf{DagCat}](\mathsf{FK}(F, \phi), S) \cong \mathsf{FEM}([\mathcal{D}^\text{op}, \mathsf{DagCat}](F, S), [\mathcal{D}^\text{op}, \mathsf{DagCat}](\phi, S))
\end{align*}
for each dagger $2$-functor $S: \mathcal{D}^\text{op} \longrightarrow \mathsf{DagCat}$. 

Now, we proceed by a familiar transfinite process of, starting with the collection of all representable dagger $2$-functors $\mathcal{D}(-, D)$ in $[\mathcal{D}^\text{op}, \mathsf{DagCat}]$ and adding to this collection at each step thereafter, all Frobenius-Kleisli objects of dagger Frobenius monads involving objects of the collection at the previous step. Since the argument presented in \cite{LS02} boils down to the fact that the free functor $\mathcal{D}(X, D) \longrightarrow \mathcal{D}(X, D)_T$ to the Kleisli category for a monad $T$ on $\mathcal{D}(X, D)$ is bijective on objects, the same argument applies \textit{mutis mutandis} in our dagger case, so that this transfinite process in fact also terminates after the first step.

In conclusion, taking the replete full dagger sub-$2$-category of $[\mathcal{D}^\text{op}, \mathsf{DagCat}]$ of objects resulting from the single step of this process produces a dagger $2$-category having Frobenius-Kleisli objects. Furthermore, since each representable $\mathcal{D}(-, D)$ is itself a Frobenius-Kleisli object for a dagger Frobenius monad on a representable (for example, the identity monad on $\mathcal{D}(-, D)$), every object of this dagger $2$-category is a Frobenius-Kleisli object for a dagger Frobenius monad on a representable. We shall denote this dagger $2$-category by $\mathsf{FK}(\mathcal{D})$.

A simplification is possible which allows us to give an explicit description of $\mathsf{FK}(\mathcal{D})$.

\begin{proposition}\label{propHomCatFK}
Each $1$-cell $\mathsf{FK}\big(\mathcal{D}(-, D), \mathcal{D}(-, t)\big) \longrightarrow \mathsf{FK}\big(\mathcal{D}(-, C), \mathcal{D}(-, s)\big)$ in $\mathsf{FK}(\mathcal{D})$ is a pair $(f, \sigma)$ in which $f: D \longrightarrow C$ is a $1$-cell in $\mathcal{D}$ and $\sigma: ft \longrightarrow sf$ a $2$-cell in $\mathcal{D}$ which make the following diagrams
    \begin{equation}\label{eq:morphOfMndop}
        \bfig
        \node TAL(0,650)[sft]
        \node T2AR(800,650)[ssf]
        \node P(-450,325)[ftt]
        \node T2AL(0,0)[ft]
        \node TAR(800,0)[sf]
        \arrow/->/[TAL`T2AR;s \sigma]
        \arrow/->/[P`TAL;\sigma t]
        \arrow|b|/->/[P`T2AL;f \mu^t]
        \arrow|r|/->/[T2AR`TAR;\mu^s f]
        \arrow|r|/->/[T2AL`TAR;\sigma]
        \node TAL2(2000,650)[ssf]
        \node T2AR2(2800,650)[sf]
        \node P2(1550,325)[sft]
        \node T2AL2(2000,0)[ftt]
        \node TAR2(2800,0)[ft]
        \arrow/->/[TAL2`T2AR2;\mu^s f]
        \arrow/->/[P2`TAL2;s\sigma]
        \arrow|b|/->/[P2`T2AL2;\sigma^\dagger t]
        \arrow|r|/->/[T2AR2`TAR2;\sigma^\dagger]
        \arrow|r|/->/[T2AL2`TAR2;f\mu^t]
        \node A1IB1(675,-400)[ft]
        \node 1AI1B(1675,-400)[sf]
        \node AB(1175,-800)[f]
        \arrow/->/[A1IB1`1AI1B;\sigma]
        \arrow|l|/->/[AB`A1IB1;f\eta^t]
        \arrow|r|/->/[AB`1AI1B;\eta^sf]
        \efig
    \end{equation}
commute. Furthermore, each $2$-cell in $\mathsf{FK}(\mathcal{D})$ between such $1$-cells $(f, \sigma)$, $(g, \gamma)$ is a $2$-cell $\alpha: f \longrightarrow sg$ in $\mathcal{D}$ such that the diagram

\begin{equation}\label{op2CellMorphFEM}
        \bfig
        \node TAL2(1700,600)[sf]
        \node T2AR(2300,600)[ssg]
        \node P2(1100,600)[ft]
        \node T2AL3(1100,0)[sgt]
        \node T2AL2(1700,0)[ssg]
        \node TAR2(2300,0)[sg]
        \arrow/->/[TAL2`T2AR;s \alpha]
        \arrow/->/[P2`TAL2;\sigma]
        \arrow/->/[P2`T2AL3;\alpha t]
        \arrow|b|/->/[T2AL3`T2AL2;s\gamma]
        \arrow|r|/->/[T2AR`TAR2;\mu^sg]
        \arrow|b|/->/[T2AL2`TAR2;\mu^sg]
        \efig
\end{equation}
commutes.
\end{proposition}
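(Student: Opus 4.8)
The plan is to exploit that $\mathsf{FK}(\mathcal{D})$ is, by its very construction, a full dagger sub-$2$-category of $[\mathcal{D}^\text{op}, \mathsf{DagCat}]$, so that a $1$-cell (respectively $2$-cell) between two of its objects is nothing but a $2$-natural transformation (respectively modification) in the functor $2$-category. First I would instantiate the $2$-natural isomorphism established just before the statement at $S = \mathsf{FK}\big(\mathcal{D}(-, C), \mathcal{D}(-, s)\big)$. Using the Yoneda Lemma to identify $[\mathcal{D}^\text{op}, \mathsf{DagCat}]\big(\mathcal{D}(-, D), S\big) \cong S(D) = \mathcal{D}(D, C)_{\mathcal{D}(D, s)} =: \mathbf{K}$, and checking, by naturality of the Yoneda isomorphism against the $1$-cell $t: D \longrightarrow D$, that the endomonad $[\mathcal{D}^\text{op}, \mathsf{DagCat}]\big(\mathcal{D}(-, t), S\big)$ corresponds to $S(t)$, one obtains an isomorphism of dagger categories
\[
[\mathcal{D}^\text{op}, \mathsf{DagCat}]\big(\mathsf{FK}(\mathcal{D}(-, D), \mathcal{D}(-, t)), S\big) \cong \mathsf{FEM}\big(\mathbf{K}, S(t)\big).
\]
Thus $1$-cells $\mathsf{FK}(\mathcal{D}(-, D), \mathcal{D}(-, t)) \longrightarrow \mathsf{FK}(\mathcal{D}(-, C), \mathcal{D}(-, s))$ in $\mathsf{FK}(\mathcal{D})$ correspond bijectively to FEM-algebras $(f, \sigma)$ for the dagger Frobenius monad $S(t)$ on the Kleisli category $\mathbf{K}$.

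Second, I would read off the data of such an FEM-algebra. Its underlying object $f$ of $\mathbf{K}$ is exactly a $1$-cell $f: D \longrightarrow C$ in $\mathcal{D}$, while its structure map is a Kleisli morphism $\sigma: S(t)(f) = ft \longrightarrow f$, that is, a $2$-cell $\sigma: ft \longrightarrow sf$ in $\mathcal{D}$ --- precisely the data asserted. To translate the axioms I would record, from the description of $\mathbf{K}$ as a Kleisli category and from Theorem \ref{DagCatHasFK}, the explicit Kleisli composition, the Kleisli identity $\eta^s f$ on $f$, the Kleisli dagger of (\ref{daggerOnFK}), and the unit and multiplication of $S(t)$, which are the free Kleisli images $\eta^s (ft) \cdot f\eta^t$ and $\eta^s (ft) \cdot f\mu^t$ of $f\eta^t$ and $f\mu^t$. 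Unwinding the Eilenberg-Moore unit law $\sigma \circ S(\eta)_f = 1_f$ in $\mathbf{K}$, using naturality of $\eta^s$ and the monad unit law for $s$, collapses to the bottom triangle $\sigma \cdot f\eta^t = \eta^s f$ of (\ref{eq:morphOfMndop}); likewise the associativity law $\sigma \circ S(t)(\sigma) = \sigma \circ S(\mu)_f$ collapses, after the same kind of simplification, to the top-left square $\mu^s f \cdot s\sigma \cdot \sigma t = \sigma \cdot f\mu^t$.

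I expect the main obstacle to be the translation of the Frobenius law for the FEM-algebra $(f, \sigma)$ on $\mathbf{K}$, namely $S(\mu)_f \circ S(t)(\sigma^\dagger) = S(t)(\sigma) \circ S(\mu)_f^\dagger$, into the top-right diagram of (\ref{eq:morphOfMndop}). Here $\sigma^\dagger$ is the Kleisli dagger of (\ref{daggerOnFK}), which expands the $\mathbf{K}$-morphism $\sigma^\dagger: f \longrightarrow ft$ into the composite $2$-cell $s(\sigma^\dagger) \cdot (\mu^s)^\dagger f \cdot \eta^s f$ in $\mathcal{D}$, the innermost $\sigma^\dagger$ now denoting the $\mathcal{D}$-level dagger. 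The bulk of the work is to write out both Kleisli composites explicitly and to simplify the resulting $\mathcal{D}$-level $2$-cells using naturality of $\eta^s$ and $\mu^s$, the monad laws for $s$, and --- crucially --- the Frobenius law for the dagger Frobenius monad $s$; the upshot should be the single clean identity $\sigma^\dagger \cdot \mu^s f \cdot s\sigma = f\mu^t \cdot \sigma^\dagger t$ recorded by that diagram. This is where all the delicacy of the dagger structure is concentrated, since it is the only axiom whose unwinding forces the Kleisli dagger formula and the Frobenius law of $s$ to interact.

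Finally, for the $2$-cells, a modification between the $1$-cells corresponding to $(f, \sigma)$ and $(g, \gamma)$ corresponds under the same isomorphism to a homomorphism of FEM-algebras, which is simply a homomorphism of Eilenberg-Moore algebras for $S(t)$, with no further condition imposed. Such a homomorphism is a Kleisli morphism $\alpha: f \longrightarrow g$, that is, a $2$-cell $\alpha: f \longrightarrow sg$ in $\mathcal{D}$, and the homomorphism condition $\gamma \circ S(t)(\alpha) = \alpha \circ \sigma$ in $\mathbf{K}$, once both Kleisli composites are written out, becomes exactly $\mu^s g \cdot s\alpha \cdot \sigma = \mu^s g \cdot s\gamma \cdot \alpha t$, which is the commuting diagram (\ref{op2CellMorphFEM}). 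The remaining checks --- that these assignments are functorial, respect the daggers, and assemble $2$-naturally --- I expect to be routine, proceeding exactly along the lines of Proposition \ref{prop2}.
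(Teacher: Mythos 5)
Your proposal is correct and follows essentially the same route as the paper: reduce via the defining universal property of Frobenius--Kleisli objects and the Yoneda lemma to FEM-algebras for the induced monad on the Kleisli category $\mathcal{D}(D,C)_{\mathcal{D}(D,s)}$, then unwind the Kleisli composition, the canonical Kleisli dagger (\ref{daggerOnFK}), and the Frobenius law of $s$ to obtain the diagrams (\ref{eq:morphOfMndop}) and (\ref{op2CellMorphFEM}). The only cosmetic difference is that the paper unwinds the Frobenius condition in the form given by \cite[Lemma 6.8]{HK16} (that $\sigma^\dagger$ be an Eilenberg--Moore homomorphism) rather than the Frobenius law diagram directly, but the computation and its outcome are the same.
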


\begin{proof}
We proceed by similar arguments presented in \cite{LS02}. For the objects $\mathsf{FK}\big(\mathcal{D}(-, D), \mathcal{D}(-, t)\big)$ and $\mathsf{FK}\big(\mathcal{D}(-, C), \mathcal{D}(-, s)\big)$ in $\mathsf{FK}(\mathcal{D})$, determined, respectively, by the dagger Frobenius monads $(D, t)$ and $(C, s)$ in $\mathcal{D}$, a $1$-cell
\begin{align}\label{1cells}
    \mathsf{FK}\big(\mathcal{D}(-, D), \mathcal{D}(-, t)\big) \longrightarrow \mathsf{FK}\big(\mathcal{D}(-, C), \mathcal{D}(-, s)\big)
\end{align}
is a FEM-algebra for the dagger Frobenius monad 
\begin{align}\label{proofDfmnd1}
    \mathsf{FK}\big(\mathcal{D}\big)\Big(\mathcal{D}(-, t),\mathsf{FK}\big(\mathcal{D}(-, C), \mathcal{D}(-, s)\big)\Big)
\end{align}
on the dagger category
\begin{align}\label{proofDfmnd2}
    \mathsf{FK}\big(\mathcal{D}\big)\Big(\mathcal{D}(-, D), \mathsf{FK}\big(\mathcal{D}(-, C), \mathcal{D}(-, s)\big)\Big)    
\end{align}
by the definition of Frobenius-Kleisli objects. By the Yoneda lemma for dagger $2$-categories, (\ref{proofDfmnd2}) is $2$-naturally isomorphic to $\mathsf{FK}\big(\mathcal{D}(D, C), \mathcal{D}(D, s)\big)$, while the dagger Frobenius monad corresponding to (\ref{proofDfmnd1}) is denoted $\mathsf{FK}\big(\mathcal{D}(t, C), \mathcal{D}(t, s)\big)$. By a similar argument, a $2$-cell between $1$-cells (\ref{1cells}) is simply a morphism of Eilenberg-Moore algebras between the corresponding FEM-algebras. That is, there is an isomorphism of dagger categories between the dagger category $\mathsf{FK}(\mathcal{D})\Big(\mathsf{FK}\big(\mathcal{D}(-, D), \mathcal{D}(-, t)\big), \mathsf{FK}\big(\mathcal{D}(-, C), \mathcal{D}(-, s)\big)\Big)$ and the dagger category $\mathsf{FEM}\Big(\mathsf{FK}\big(\mathcal{D}(D, C), \mathcal{D}(D, s)\big), \mathsf{FK}\big(\mathcal{D}(t, C), \mathcal{D}(t, s)\big)\Big)$ which is $2$-natural in the arguments.

Now, the dagger category $\mathsf{FK}\big(\mathcal{D}(D, C), \mathcal{D}(D, s)\big)$ has as objects $1$-cells $f: D \longrightarrow C$ in $\mathcal{D}$, and as morphisms $2$-cells $\sigma: f \longrightarrow sg$ in $\mathcal{D}$. Composition is given by the usual Kleisli composition. Turning to the dagger Frobenius monad $\mathsf{FK}\big(\mathcal{D}(t, C), \mathcal{D}(t, s)\big)$, its (dagger) endofunctor part acts on objects by $f \longmapsto ft$ and on morphisms $(\sigma: f \longrightarrow sg) \longmapsto (\sigma t : ft \longrightarrow sgt)$. The component at some $g: D \longrightarrow C$ of the multiplication part of this dagger Frobenius monad is given by $\eta^s gt \cdot g\mu^t$. Likewise, the component at $g: D \longrightarrow C$ of the unit part is given by $\eta^s gt \cdot g\eta^t$. 

Therefore, a $1$-cell (\ref{1cells}) is a pair $(f, \sigma)$ in which $f: D \longrightarrow C$ is a $1$-cell in $\mathcal{D}$, and $\sigma: ft \longrightarrow sf$ a $2$-cell in $\mathcal{D}$ satisfying the associative, unit and Frobenius laws for a FEM-algebra for the dagger Frobenius monad $\mathsf{FK}\big(\mathcal{D}(t, C), \mathcal{D}(t, s)\big)$ -- the first two laws of which give the top-left and bottom diagrams of (\ref{eq:morphOfMndop}). 

It remains only to calculate the Frobenius law diagram for $(f, \sigma)$. By \cite[Lemma 6.8]{HK16}, this is exactly the commutativity of the diagram 
\[
    \bfig
    \node A(0,600)[ft]
    \node B(0,0)[sf]
    \node C(900,600)[sftt]
    \node D(900,0)[sft]
    \arrow/->/[A`C;\tau]
    \arrow|b|/->/[B`D;\rho]
    \arrow/->/[A`B;\sigma]
    \arrow|r|/->/[C`D;\mu^s ft \cdot s \sigma t]
    \efig
\]
in which $\tau = sf{\mu^t}^\dagger \cdot s{\eta^s}^\dagger ft \cdot {\mu^s}^\dagger ft \cdot \eta^s ft$ and $\rho = \mu^s ft \cdot ss\sigma^\dagger \cdot s{\mu^s}^\dagger f \cdot s \eta^s f$. The top path is
\begin{align*}
    \mu^s ft \cdot s \sigma t \cdot \tau &= \mu^s ft\cdot s \sigma t \cdot sf{\mu^t}^\dagger \cdot s{\eta^s}^\dagger ft \cdot {\mu^s}^\dagger ft \cdot \eta^s ft \\ 
    &= \mu^s ft \cdot s \sigma t \cdot sf{\mu^t}^\dagger \cdot \eta^s ft \\
    &= \mu^s ft \cdot s (\sigma t \cdot f{\mu^t}^\dagger) \cdot \eta^s ft \\
    &= \mu^s ft \cdot \eta^s sft \cdot \sigma t \cdot f{\mu^t}^\dagger \\
    &= \sigma t \cdot f{\mu^t}^\dagger
\end{align*}
while, using the Frobenius law for the dagger Frobenius monad $(C, s)$, for the bottom path we have
\begin{align*}
    \rho \cdot \sigma &=\mu^s ft \cdot ss\sigma^\dagger \cdot s{\mu^s}^\dagger f \cdot s \eta^s f \cdot \sigma \\
    &= s\sigma^\dagger \cdot \mu^s sf \cdot s{\mu^s}^\dagger f \cdot s \eta^s f \cdot \sigma \\
    &= s\sigma^\dagger \cdot s\mu^s f \cdot {\mu^s}^\dagger sf \cdot s \eta^s f \cdot \sigma \\
    &= s\sigma^\dagger \cdot s\mu^s f \cdot ss\eta^s f \cdot {\mu^s}^\dagger f \cdot \sigma \\
    &= s\sigma^\dagger \cdot {\mu^s}^\dagger f \cdot \sigma
\end{align*}
That is, the commutativity of the Frobenius law diagram for $(f, \sigma)$ is the equality
\begin{align*}
    \sigma^\dagger \cdot {\mu^s} f \cdot s\sigma =  f{\mu^t} \cdot \sigma^\dagger t    
\end{align*}
which is exactly the top-right diagram (\ref{eq:morphOfMndop}).

Finally, as in the (possibly) non-dagger case presented in \cite{LS02}, to give a $2$-cell $\alpha: (f, \sigma) \longrightarrow (g, \gamma)$ seen as FEM-algebras for the dagger Frobenius monad $\mathsf{FK}\big(\mathcal{D}(t, C), \mathcal{D}(t, s)\big)$ is to give a $2$-cell $\alpha: f \longrightarrow sg$ in $\mathcal{D}$ satisfying (\ref{op2CellMorphFEM}). The dagger $\alpha^\dagger$, considered as a $2$-cell in $\mathsf{FK}(\mathcal{D})$, is calculated from the canonical dagger (\ref{daggerOnFK}) as the $2$-cell $s\alpha^\dagger \cdot \mu^{s \dagger} g \cdot \eta^s g : g \longrightarrow sf$ in $\mathcal{D}$.
\end{proof}

We may now take the $0$-cells of $\mathsf{FK}(\mathcal{D})$ to be dagger Frobenius monads in $\mathcal{D}$, and $1$- and $2$-cells in $\mathsf{FK}(\mathcal{D})$ to be as described in the above proposition. Furthermore, the Yoneda embedding dagger $2$-functor induces a ($2$-)fully faithful dagger $2$-functor $I: \mathcal{D} \longrightarrow \mathsf{FK}(\mathcal{D})$ whose action on $0$-cells is given by $D \longmapsto (D, 1)$, the identity dagger Frobenius monad on $D$.

Furthermore, we now define $\mathsf{FEM}(\mathcal{D}) = \mathsf{KL}(\mathcal{D}^\text{op})^\text{op}$. A $0$-cell in $\mathsf{FEM}(\mathcal{D})$ is once again a dagger Frobenius monad in $\mathcal{D}$, while $1$-cells are the same as $1$-cells in $\mathsf{DFMnd}(\mathcal{D})$. A $2$-cell $(f, \sigma) \longrightarrow (g, \gamma): (D, t) \longrightarrow (C, s)$ is a $2$-cell $\alpha: f \longrightarrow gt$ in $\mathcal{D}$ such that the diagram
\begin{equation}\label{FEMD2Cells}
        \bfig
        \node TAL2(1700,600)[ft]
        \node T2AR(2300,600)[gtt]
        \node P2(1100,600)[sf]
        \node T2AL3(1100,0)[sgt]
        \node T2AL2(1700,0)[gtt]
        \node TAR2(2300,0)[gt]
        \arrow/->/[TAL2`T2AR;\alpha t]
        \arrow/->/[P2`TAL2;\sigma]
        \arrow/->/[P2`T2AL3;s\alpha]
        \arrow|b|/->/[T2AL3`T2AL2;\gamma t]
        \arrow|r|/->/[T2AR`TAR2;g\mu^t]
        \arrow|b|/->/[T2AL2`TAR2;g\mu^t]
        \efig
\end{equation}
commutes. Again, the restricted Yoneda embedding dagger $2$-functor induces a dagger $2$-functor $I: \mathcal{D} \longrightarrow \mathsf{FEM}(\mathcal{D})$ whose action on $0$-cells is given by $D \longmapsto (D, 1)$.

\begin{example}\label{DwithFEMObjects}
Consider $\mathsf{FEM}(\mathcal{D})$ for the case that the dagger $2$-category $\mathcal{D}$ has Frobenius-Eilenberg-Moore objects. As usual, it has $0$-cells as dagger Frobenius monads in $\mathcal{D}$. Given dagger Frobenius monads $(D, t)$ and $(C, s)$ in $\mathcal{D}$, there is a bijection between the set of $1$-cells $(f, \sigma): (D, t) \longrightarrow (C, s)$ of $\mathsf{DFMnd}(\mathcal{D})$ (and hence $\mathsf{FEM}(\mathcal{D})$) and the set of pairs $(f, \overline{f})$ of $1$-cells in $\mathcal{D}$ such that the diagram
\[
        \bfig
        \node DT(0,500)[\mathsf{FEM}(D, t)]
        \node CS(900,500)[\mathsf{FEM}(C, s)]
        \node D(0,0)[D]
        \node C(900,0)[C]
        \arrow/->/[DT`CS;\overline{f}]
        \arrow/->/[DT`D;u^t]
        \arrow|r|/->/[CS`C;u^s]
        \arrow|b|/->/[D`C;f]
        \efig
\]
commutes, where $u^t$ and $u^s$ are the forgetful $1$-cells. To see this, first fix the $1$-cell $f: D \longrightarrow C$. To give a $1$-cell $\overline{f} : \mathsf{FEM}(D, t) \longrightarrow \mathsf{FEM}(C, s)$ such that the above diagram commutes is, by the definition of the FEM-object $\mathsf{FEM}(C, s)$, to give a FEM-algebra $(fu^t, \xi)$ for the dagger Frobenius monad $\mathcal{D}(\mathsf{FEM}(D, t), s)$ on the dagger category $\mathcal{D}(\mathsf{FEM}(D, t), C)$. But the adjunction $f^t \dashv u^t$ in $\mathcal{D}$ of course induces an adjunction $\mathcal{D}(u^t, C) \dashv \mathcal{D}(f^t, C)$ in $\mathsf{DagCat}$, so that there is a bijection
\begin{align*}
    \mathcal{D}(\mathsf{FEM}(D, t), C)(sfu^t, fu^t) \cong \mathcal{D}(D, C)(sf, ft)
\end{align*}
So, by \cite[Lemma 6.8]{HK16}, to give such a FEM-algebra $(fu^t, \xi)$ for the dagger Frobenius monad $\mathcal{D}(\mathsf{FEM}(D, t), S)$ is exactly to give a morphism $(f, \sigma): (D, t) \longrightarrow (C, s)$ of dagger Frobenius monads. 

In other words, $1$-cells $(D, t) \longrightarrow (C, s)$ in $\mathsf{FEM}(\mathcal{D})$ are pairs $(f, \overline{f})$ of $1$-cells in $\mathcal{D}$ satisfying $fu^t = u^s \overline{f}$.

As is true in the (possibly) non-dagger case in \cite{LS02}, a $2$-cell $(f, \overline{f}) \longrightarrow (g, \overline{g})$ in $\mathsf{FEM}(\mathcal{D})$ from $(D, t)$ to $(C, s)$ is simply a $2$-cell $\overline{f} \longrightarrow \overline{g}$ in $\mathcal{D}$. 

Next, suppose that $t\eta^t = \eta^t t$. We show that under this condition, this correspondence of $2$-cells preserves daggers. Indeed, for $1$-cells $(f, \overline{f})$, $(g, \overline{g})$ in $\mathsf{FEM}(\mathcal{D})$, to give a $2$-cell $\overline{\alpha}: \overline{f} \longrightarrow \overline{g}$ in $\mathcal{D}$ is exactly to give a $2$-cell $u^s\overline{\alpha}f^t \cdot f \eta^t = \alpha: f \longrightarrow gt$ in $\mathsf{FEM}(\mathcal{D})$. Therefore, to give the $2$-cell $\overline{\alpha}^\dagger : \overline{g} \longrightarrow \overline{f}$ in $\mathcal{D}$ is exactly to give the $2$-cell $u^s\overline{\alpha}^\dagger f^t \cdot g \eta^t: g \longrightarrow ft$. But $\alpha^\dagger$ is calculated as the $2$-cell
\begin{align*}
    (u^s\overline{\alpha}f^t \cdot f \eta^t)^\dagger t  \cdot g \mu^{t \dagger} \cdot g \eta^t = f \eta^{t \dagger} t \cdot u^s\overline{\alpha}^\dagger f^t t \cdot g \mu^{t \dagger} \cdot g \eta^t
\end{align*}
in $\mathcal{D}$. Therefore, it remains to show that $u^s\overline{\alpha}^\dagger f^t \cdot g \eta^t = f \eta^{t \dagger} t \cdot u^s\overline{\alpha}^\dagger f^t t \cdot g \mu^{t \dagger} \cdot g \eta^t$, which is the case when $t\eta^{t \dagger} = \eta^{t \dagger} t$.
\end{example}

\begin{theorem}\label{freeCompletionFEM}
Let $\mathcal{D}$ be a dagger $2$-category, and let $\mathcal{C}$ be a dagger $2$-category such that, for every dagger Frobenius monad $(C, s)$ in $\mathcal{C}$, the equality
\begin{align*}
    s\eta^s = \eta^s s
\end{align*}
holds. Then, if $\mathcal{C}$ has Frobenius-Eilenberg-Moore objects, composition with the dagger inclusion $2$-functor $I$ induces an equivalence of categories
\begin{align*}
    [\mathsf{FEM}(\mathcal{D}), \mathcal{C}]_{\mathsf{FEM}} \approx [\mathcal{D}, \mathcal{C}]
\end{align*}
between the dagger $2$-functor category $[\mathcal{D}, \mathcal{C}]$ and the full subcategory of the dagger $2$-functor category $[\mathsf{FEM}(\mathcal{D}), \mathcal{C}]_{\mathsf{FEM}}$ of dagger $2$-functors which preserve Frobenius-Eilenberg-Moore objects.
\end{theorem}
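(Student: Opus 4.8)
The plan is to show that the functor induced by precomposition with the dagger inclusion $I : \mathcal{D} \longrightarrow \mathsf{FEM}(\mathcal{D})$, namely $(-) \circ I : [\mathsf{FEM}(\mathcal{D}), \mathcal{C}]_{\mathsf{FEM}} \longrightarrow [\mathcal{D}, \mathcal{C}]$, is both essentially surjective and fully faithful, following very closely the argument of \cite{LS02} and leaning throughout on the explicit description of $\mathsf{FEM}$ recorded in Example \ref{DwithFEMObjects}, now applied to the target $\mathcal{C}$. For essential surjectivity I would, given a dagger $2$-functor $S : \mathcal{D} \longrightarrow \mathcal{C}$, construct an FEM-preserving dagger $2$-functor $\hat{S} : \mathsf{FEM}(\mathcal{D}) \longrightarrow \mathcal{C}$ together with a $2$-natural isomorphism $\hat{S} I \cong S$; for full faithfulness I would show that every $2$-natural transformation between FEM-preserving dagger $2$-functors is determined by, and uniquely extends, its restriction along $I$.

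To build $\hat{S}$, I would define it on $0$-cells by sending a dagger Frobenius monad $(D, t)$ in $\mathcal{D}$ to the FEM-object $\mathsf{FEM}(SD, St)$ in $\mathcal{C}$. This is legitimate because $S$, being a dagger $2$-functor, carries $(D,t)$ to a dagger Frobenius monad $(SD, St)$ in $\mathcal{C}$ -- it preserves the monad data, the daggers, and hence the Frobenius law diagram of Definition \ref{daggerFrobMonad} -- and $\mathcal{C}$ is assumed to have FEM-objects. On a $1$-cell $(f, \sigma) : (D, t) \longrightarrow (C, s)$ of $\mathsf{FEM}(\mathcal{D})$, which is exactly a morphism of dagger Frobenius monads, I would take the image morphism $(Sf, S\sigma) : (SD, St) \longrightarrow (SC, Ss)$ and let $\hat{S}(f, \sigma)$ be the induced $1$-cell $\overline{Sf} : \mathsf{FEM}(SD, St) \longrightarrow \mathsf{FEM}(SC, Ss)$ characterised, as in Example \ref{DwithFEMObjects} applied to $\mathcal{C}$, by $Sf \cdot u^{St} = u^{Ss} \cdot \overline{Sf}$. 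On a $2$-cell $\alpha$ satisfying (\ref{FEMD2Cells}) I would take $S\alpha$, which satisfies the corresponding diagram in $\mathcal{C}$, and pass to the associated $2$-cell $\overline{S\alpha} : \overline{Sf} \longrightarrow \overline{Sg}$ under the $2$-cell correspondence of the same Example. Since the defining equation for an induced $1$-cell is strict and pins it down uniquely, functoriality of $S$ transfers to \emph{strict} functoriality of $\hat{S}$ on $1$- and $2$-cells.

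It then remains to check that $\hat{S}$ is a \emph{dagger} $2$-functor and that it preserves FEM-objects. FEM-preservation is essentially definitional: the object $(D, t)$ of $\mathsf{FEM}(\mathcal{D})$ is the FEM-object of a dagger Frobenius monad on $I(D) = (D, 1)$, and $\hat{S}$ sends it to $\mathsf{FEM}(SD, St)$, the FEM-object of the image monad on $\hat{S}(D, 1) = \mathsf{FEM}(SD, 1) \cong SD$; in particular $\hat{S} I \cong S$, which yields essential surjectivity. The main obstacle is dagger-preservation on $2$-cells: the dagger on $\mathsf{FEM}(\mathcal{D})$ is the canonical one of (\ref{daggerOnFK}), and I must verify $\hat{S}(\alpha^\dagger) = (\hat{S}\alpha)^\dagger$. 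This is precisely where the standing hypothesis $s\eta^s = \eta^s s$ on every dagger Frobenius monad in $\mathcal{C}$ is consumed: the computation carried out at the end of Example \ref{DwithFEMObjects} shows that the $2$-cell correspondence $\overline{(-)}$ in $\mathcal{C}$ intertwines daggers exactly under this condition (equivalently $s\eta^{s\dagger} = \eta^{s\dagger} s$), and the image monads $(SD, St)$ fall under this hypothesis. As $S$ already preserves daggers of $2$-cells, composing the two observations gives $\hat{S}(\alpha^\dagger) = (\hat{S}\alpha)^\dagger$.

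For full faithfulness I would argue exactly as in \cite{LS02}. Given FEM-preserving dagger $2$-functors $P, Q : \mathsf{FEM}(\mathcal{D}) \longrightarrow \mathcal{C}$, restriction along $I$ is a bijection on $2$-natural transformations. A $2$-natural transformation $\theta : P I \longrightarrow Q I$ admits a unique $2$-natural extension $\hat{\theta} : P \longrightarrow Q$: since $P(D,t)$ and $Q(D,t)$ are the FEM-objects of the respective image monads and both $P$ and $Q$ preserve the forgetful $1$-cells together with their structure $2$-cells, the component $\hat{\theta}_{(D,t)}$ is forced by $\hat{\theta}_{(D,1)} = \theta_D$ through the universal property of the FEM-object, with $2$-naturality and uniqueness following from that same universal property. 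Conversely, any $2$-natural transformation $P \longrightarrow Q$ restricts to one $P I \longrightarrow Q I$, and the two assignments are mutually inverse. Essential surjectivity and full faithfulness together deliver the claimed equivalence $[\mathsf{FEM}(\mathcal{D}), \mathcal{C}]_{\mathsf{FEM}} \approx [\mathcal{D}, \mathcal{C}]$.
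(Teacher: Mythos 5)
Your proposal follows essentially the same route as the paper: both define the extension on $0$-cells by $(D,t) \longmapsto \mathsf{FEM}(SD, St)$, obtain the action on $1$- and $2$-cells from the correspondence of Example \ref{DwithFEMObjects} applied to $\mathcal{C}$ (equivalently, as the composite of $\mathsf{FEM}(S)$ with the dagger $2$-functor $\mathsf{FEM}(\mathcal{C}) \longrightarrow \mathcal{C}$), and consume the hypothesis $s\eta^s = \eta^s s$ exactly where you do, namely in making the $2$-cell correspondence dagger-preserving. Your write-up is in fact slightly more complete than the paper's, which only argues existence and essential uniqueness of the extension and leaves the full-faithfulness half of the equivalence implicit, whereas you spell it out via the universal property of the FEM-objects.
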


\begin{proof}
Since $\mathcal{C}$ has FEM-objects, a FEM-object-preserving dagger $2$-functor $\overline{F}: \mathsf{FEM}(\mathcal{D})$ $\longrightarrow \mathcal{C}$ extending $F: \mathcal{D} \longrightarrow \mathcal{C}$ must be defined (up to $2$-natural isomorphism) on $0$-cells by
\begin{align*}
    \overline{F}\big(D, t\big) = \mathsf{FEM}\big(FD, Ft\big)    
\end{align*}
while its action on $1$-cells and $2$-cells must be defined by the action of the composite of the dagger $2$-functor $\mathsf{FEM}(\mathcal{C}) \longrightarrow \mathcal{C}$ of Example \ref{DwithFEMObjects} with the dagger $2$-functor $\mathsf{FEM}(F) : \mathsf{FEM}(\mathcal{D}) \longrightarrow \mathsf{FEM}(\mathcal{C})$ induced by $F$.

On the other hand, these requirements can be used as a definition of such a dagger $2$-functor $\overline{F}: \mathsf{FEM}(\mathcal{D}) \longrightarrow \mathcal{C}$. Therefore, the desired extension does exist and is unique up to a $2$-natural isomorphism.
\end{proof}

\begin{proposition}
If the inclusion dagger $2$-functor functor $I :\mathcal{D} \longrightarrow \mathsf{FEM}(\mathcal{D})$ has a right $2$-adjoint, then $\mathcal{D}$ has Frobenius-Eilenberg-Moore objects.
\end{proposition}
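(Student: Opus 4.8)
The plan is to extract the Frobenius-Eilenberg-Moore object of an arbitrary dagger Frobenius monad $(C,s)$ in $\mathcal{D}$ directly from the value of the right $2$-adjoint at $(C,s)$. Write $R: \mathsf{FEM}(\mathcal{D}) \longrightarrow \mathcal{D}$ for the hypothesised right $2$-adjoint of $I$. Since $I$ is a dagger $2$-functor, the correspondence between such $2$-adjunctions and $2$-natural isomorphisms in $\mathsf{DagCat}$ recalled in the Preliminaries supplies, for every object $D$ of $\mathcal{D}$ and every $0$-cell $(C,s)$ of $\mathsf{FEM}(\mathcal{D})$ (that is, every dagger Frobenius monad in $\mathcal{D}$), a $2$-natural isomorphism of dagger categories
$$\mathcal{D}\big(D, R(C,s)\big) \;\cong\; \mathsf{FEM}(\mathcal{D})\big(ID, (C,s)\big) \;=\; \mathsf{FEM}(\mathcal{D})\big((D,1), (C,s)\big),$$
the last equality holding because $I$ sends $D$ to the identity dagger Frobenius monad $(D,1)$.

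First I would identify this right-hand hom-dagger-category with a category of FEM-algebras. Because the $1$-cells of $\mathsf{FEM}(\mathcal{D})$ coincide with those of $\mathsf{DFMnd}(\mathcal{D})$, a $1$-cell $(D,1) \to (C,s)$ is a pair $(f,\sigma)$ with $f: D \to C$ and $\sigma: sf \to f$, which by Proposition \ref{prop2} is exactly a FEM-algebra for the dagger Frobenius monad $\mathcal{D}(D,s)$ on $\mathcal{D}(D,C)$. For the $2$-cells one specialises diagram (\ref{FEMD2Cells}) to the case where the source monad has endofunctor part $t=1$: then $\mu^t$ and $\eta^t$ are identities, a $2$-cell $\alpha: f \to gt$ is simply a $2$-cell $\alpha: f \to g$ in $\mathcal{D}$, and (\ref{FEMD2Cells}) collapses to the single equation $\alpha \cdot \sigma = \gamma \cdot s\alpha$, which is precisely the condition that $\alpha$ be a homomorphism of Eilenberg-Moore algebras. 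Hence, as mere categories,
$$\mathsf{FEM}(\mathcal{D})\big((D,1),(C,s)\big) \;\cong\; \mathsf{FEM}\big(\mathcal{D}(D,C), \mathcal{D}(D,s)\big),$$
$2$-naturally in $D$.

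The step that I expect to demand the most care, and which is the main obstacle, is checking that this last isomorphism respects daggers. The dagger on $2$-cells of $\mathsf{FEM}(\mathcal{D})$ is a Frobenius-Kleisli dagger of the shape displayed in (\ref{daggerOnFK}) and computed in Proposition \ref{propHomCatFK}, built from the twisting monad --- here the source monad $t$ --- together with its multiplication and unit. Setting $t=1$ makes $\mu^t$ and $\eta^t$ identities, so that, exactly as in the simplification governed by the condition $t\eta^t = \eta^t t$ in Example \ref{DwithFEMObjects}, the dagger of $\alpha$ reduces to $\alpha^\dagger$, which is the dagger inherited from $\mathcal{D}(D,C)$ and hence the dagger of $\mathsf{FEM}(\mathcal{D}(D,C),\mathcal{D}(D,s))$. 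Thus the displayed isomorphism is one of dagger categories; alternatively, one may reach the same conclusion by observing that the hom-dagger-category $\mathsf{FEM}(\mathcal{D})((D,1),(C,s))$ coincides with $\mathsf{DFMnd}(\mathcal{D})((D,1),(C,s))$ and then invoking the dagger isomorphism already recorded in Proposition \ref{prop2}.

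Composing the two $2$-natural dagger isomorphisms yields
$$\mathcal{D}\big(D, R(C,s)\big) \;\cong\; \mathsf{FEM}\big(\mathcal{D}(D,C), \mathcal{D}(D,s)\big)$$
$2$-naturally in $D$, which is exactly the assertion that the dagger $2$-functor $\mathsf{FEM}(\mathcal{D}(-,C),\mathcal{D}(-,s)): \mathcal{D}^{\text{op}} \to \mathsf{DagCat}$ is representable, with representing object $R(C,s)$. By Definition \ref{FEMobj} the dagger Frobenius monad $(C,s)$ therefore has a Frobenius-Eilenberg-Moore object; since $(C,s)$ was an arbitrary dagger Frobenius monad in $\mathcal{D}$, it follows that $\mathcal{D}$ has Frobenius-Eilenberg-Moore objects, as required.
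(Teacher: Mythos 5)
Your proof is correct, but it takes a noticeably different route from the paper's. The paper dualizes: it proves the corresponding statement for Frobenius-Kleisli objects, assuming a left $2$-adjoint $L$ to $I: \mathcal{D} \longrightarrow \mathsf{FK}(\mathcal{D})$, and then chains together the adjunction isomorphism, the defining universal property of $\mathsf{FK}\big(\mathcal{D}(-,D),\mathcal{D}(-,t)\big)$ as a Frobenius-Kleisli object inside the presheaf dagger $2$-category $[\mathcal{D}^{\text{op}},\mathsf{DagCat}]$ (where it exists by construction), and the dagger Yoneda lemma to reduce $\mathsf{FK}(\mathcal{D})\big(\mathcal{D}(-,D),\mathcal{D}(-,X)\big)$ to $\mathcal{D}(D,X)$. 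You instead stay on the Eilenberg-Moore side and replace the middle two steps by the explicit cell-level description of $\mathsf{FEM}(\mathcal{D})$: specializing the $1$-cells and the $2$-cell condition (\ref{FEMD2Cells}) to a source monad with identity endofunctor part, checking that the Frobenius-Kleisli dagger degenerates to the dagger of $\mathcal{D}$ when $\mu^t$ and $\eta^t$ are identities, and then invoking Proposition \ref{prop2}. Both arguments are sound and share the same skeleton (adjunction hom-isomorphism composed with an identification of $\mathsf{FEM}(\mathcal{D})\big((D,1),(C,s)\big)$ with $\mathsf{FEM}\big(\mathcal{D}(D,C),\mathcal{D}(D,s)\big)$, yielding representability); the paper's version is shorter because the key isomorphism is the defining property of FK-objects in the presheaf category, whereas yours is more self-contained and makes visible exactly why the dagger on $2$-cells causes no trouble at an identity source monad --- the point that Example \ref{DwithFEMObjects} shows is delicate in general. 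Your fallback observation, that the hom-category coincides with $\mathsf{DFMnd}(\mathcal{D})\big((D,1),(C,s)\big)$, does require noting that the single homomorphism condition implies its daggered companion between FEM-algebras (the \cite[Lemma 6.7]{HK16} point already used in the proof of Proposition \ref{prop2}), but that is available and closes the argument.
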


\begin{proof}
We prove the dual result for Frobenius-Kleisli objects. Suppose $I$ has a left $2$-adjoint $L: \mathsf{FK}(\mathcal{D}) \longrightarrow \mathcal{D}$. Then, for any dagger Frobenius monad $(D, t)$ in $\mathcal{D}$,
\begin{align*}
    \mathcal{D}\big(L\big(\mathsf{FK}\big(\mathcal{D}(-, D), \mathcal{D}(-, t)\big)\big), X\big) &\cong \mathsf{FK}(\mathcal{D})\big(\mathsf{FK}\big(\mathcal{D}(-, D), \mathcal{D}(-, t)\big), I(X)\big) \\
    &= \mathsf{FK}(\mathcal{D})\big(\mathsf{FK}\big(\mathcal{D}(-, D), \mathcal{D}(-, t)\big), \mathcal{D}(-, X)\big) \\
    &\cong \mathsf{FEM}\big(\mathsf{FK}(\mathcal{D})\big(\mathcal{D}(-, D), \mathcal{D}(-, X)\big), \mathsf{FK}(\mathcal{D})\big(\mathcal{D}(-, t), \mathcal{D}(-, X)\big)\big) \\
    &\cong \mathsf{FEM}\big(\mathcal{D}\big(D, X\big), \mathcal{D}\big(t, X\big)\big)
\end{align*}
Therefore, the object $L\big(\mathsf{FK}\big(\mathcal{D}(-, D), \mathcal{D}(-, t)\big)\big)$ is a Frobenius-Kleisli object for $(D, t)$.
\end{proof}

\section{Dagger lax functors and dagger lax-limits}\label{dagLaxFunctors}

In this section, we extend the notion of a lax functor between $2$-categories to the dagger context. This will allow us to describe the universal properties of FEM-objects in Section \ref{sec-FEM-results} as dagger analogues of lax-limits of lax functors.

\begin{definition}\label{dagLaxFunct}
Given dagger $2$-categories $\mathcal{D}$, $\mathcal{C}$, a lax functor $F:  \mathcal{D} \longrightarrow \mathcal{C}$ -- having families
\begin{align*}
    \gamma_{A, B, C}: c^{\mathcal{C}} \cdot (F_{A, B} \times F_{B, C}) \longrightarrow F_{A, C} \cdot c^{\mathcal{D}}    
\end{align*}
and
\begin{align*}
    \delta_A : u^\mathcal{C} \longrightarrow F_{A, A} \cdot u^\mathcal{D}
\end{align*}
of `comparison' natural transformations -- is a dagger lax functor when, for each $A$, $B$ in $\mathcal{D}$, the functors $F_{A, B} : \mathcal{D}(A, B) \longrightarrow \mathcal{C}(FA, FB)$ are dagger functors, and the families $\gamma$ and $\delta$ additionally satisfy the \emph{Frobenius axiom}: For every triple of arrows
\[
    \bfig
        \node A(0,0)[A]
        \node B(500,0)[B]
        \node C(1000,0)[C]
        \node D(1500,0)[D]
        \arrow/->/[A`B;f]
        \arrow/->/[B`C;g]
        \arrow/->/[C`D;h]
    \efig
\]
in $\mathcal{D}$, the following diagram in $\mathcal{C}$
\begin{equation}\label{FrobAxiom}
    \bfig
        \node h1gf(0,400)[F(h) \cdot F(g \cdot f)]
        \node hgf(1200,400)[F(h) \cdot F(g) \cdot F(f)]
        \node 1hgf(0,0)[F(h \cdot g \cdot f)]
        \node hg1f(1200,0)[F(h \cdot g) \cdot F(f)]
        \arrow/->/[h1gf`hgf;1_{Fh} * \gamma^\dagger_{f, g}]
        \arrow|r|/->/[hgf`hg1f;\gamma_{g, h} * 1_{Ff}]
        \arrow/->/[h1gf`1hgf;\gamma_{g \cdot f, h}]
        \arrow|b|/->/[1hgf`hg1f;\gamma^\dagger_{f, h \cdot g}]
    \efig
\end{equation}
commutes, where $*$ indicates the horizontal composition of $2$-cells in $\mathcal{C}$ and, for simplicity, we have written $\gamma_{f, g}$ instead of $\big(\gamma_{A, B, C}\big)_{(f, g)}$.
\end{definition}

Let us clarify that the composite
\[
    \bfig
        \node D(0,0)[\mathcal{D}]
        \node C(500,0)[\mathcal{C}]
        \node B(1000,0)[\mathcal{B}]
        \arrow/->/[D`C;F]
        \arrow/->/[C`B;G]
    \efig
\]
dagger lax functor is indeed well-defined. For, given two such dagger lax functors, the composite family $\gamma^{GF}$ is determined via the pasting operation
\[
    \bfig
        \node DAxB(0,400)[\mathcal{D}(A, B) \times \mathcal{D}(B, C)]
        \node DAC(1600,400)[\mathcal{D}(A, C)]
        \node CFAxFB(0,0)[\mathcal{C}\big(FA, FB\big) \times \mathcal{C}\big(FB, FC\big)]
        \node CFAFC(1600,0)[\mathcal{C}\big(FA, FC\big)]
        \node BGFAxGFB(0,-400)[\mathcal{B}\big(GFA, GFB\big) \times \mathcal{B}\big(GFB, GFC\big)]
        \node BGFAGFC(1600,-400)[\mathcal{B}\big(GFA, GFC\big)]
        \arrow/->/[DAxB`DAC;c^\mathcal{D}]
        \arrow/->/[DAxB`CFAxFB;F_{A, B} \times F_{B, C}]
        \arrow|r|/->/[DAC`CFAFC;F_{A, C}]
        \arrow/->/[CFAxFB`CFAFC;c^\mathcal{C}]
        \arrow/->/[CFAxFB`BGFAxGFB;G_{FA, FB} \times G_{FB, FC}]
        \arrow/->/[BGFAxGFB`BGFAGFC;c^\mathcal{B}]
        \arrow|r|/->/[CFAFC`BGFAGFC;G_{FA, FC}]
        \morphism(900,150)/=>/<0,200>[`;\gamma^F]
        \morphism(900,-250)/=>/<0,200>[`;\gamma^G]
    \efig
\]
That is, for $f: A \longrightarrow B$ and $g: B \longrightarrow C$ in $\mathcal{D}$, the composite $\gamma^{GF}$ comparison family is given by
\begin{align*}
    \gamma^{GF}_{f, g} = G(\gamma^F_{f, g}) \cdot \gamma^G_{Ff, Fg}
\end{align*}
Then with $f$, $g$ as above, and $h: C \longrightarrow D$ in $\mathcal{D}$, the following diagram in $\mathcal{C}$
\[
    \bfig
        \node 1(0,400)[GF(h) \cdot GF(g \cdot f)]
        \node 2(1500,400)[GF(h) \cdot G\big(F(g) \cdot F(f)\big)]
        \node 4(0,0)[G\big(F(h) \cdot F(g \cdot f)\big)]
        \node 5(1500,0)[G\big(F(h) \cdot F(g) \cdot F(f)\big)]
        \arrow/->/[1`2;1_{GFh} * G\big(\gamma^{F\dagger}_{f, g}\big)]
        \arrow/->/[2`5;\gamma^G_{Fg \cdot Ff, Fh}]
        \arrow/->/[1`4;\gamma^G_{F(g \cdot f), Fh}]
        \arrow/->/[4`5;G\big(1_{Fh} * \gamma^{F\dagger}_{f, g}\big)]
        \node 3(3000,400)[GF(h) \cdot GF(g) \cdot GF(f)]
        \node 6(3000,0)[G\big(F(h) \cdot F(g)\big) \cdot GF(f)]
        \arrow/->/[2`3;1_{GFh} * \gamma^{G\dagger}_{Ff, Fg}]
        \arrow|r|/->/[3`6;\gamma^G_{Fg, Fh} * 1_{GFf}]
        \arrow/->/[5`6;\gamma^{G\dagger}_{Ff, Fh \cdot Fg}]
        \node 7(0,-400)[GF(h \cdot g \cdot f)]
        \node 8(1500,-400)[G\big(F(h \cdot g) \cdot F(f)\big)]
        \arrow/->/[5`8;G\big(\gamma^F_{g, h} * 1_{Ff}\big)]
        \arrow/->/[4`7;G\big(\gamma^F_{g \cdot f, h}\big)]
        \arrow|r|/->/[7`8;G\big(\gamma^{F\dagger}_{f, h \cdot g}\big)]
        \node 9(3000,-400)[GF(h \cdot g) \cdot GF(f)]
        \arrow|r|/->/[6`9;G\big(\gamma^F_{g, h}\big) * 1_{GFf}]
        \arrow|r|/->/[8`9;\gamma^{G\dagger}_{Ff, F(h \cdot g)}]
    \efig
\]
commutes. Therefore, one easily sees that $\gamma^{GF}$ does indeed satisfy the Frobenius axiom (\ref{FrobAxiom}) of Definition \ref{dagLaxFunct}.

\begin{example}
For a dagger $2$-category $\mathcal{D}$, \cite[Lemma 5.4]{HV19} shows that a dagger lax functor $\mathbf{1} \longrightarrow \mathcal{D}$ from the terminal dagger $2$-category $\mathbf{1}$ to $\mathcal{D}$ is exactly a dagger Frobenius monad $(D, t)$ in $\mathcal{D}$. Moreover, each dagger $2$-functor $F : \mathcal{D} \longrightarrow \mathcal{C}$ is of course a dagger lax functor, in which the comparison families $\gamma^F$ and $\delta^F$ are simply the identity families of natural transformations. Since dagger lax functors compose, this provides an immediate proof of the fact that dagger $2$-functors send dagger Frobenius monads to dagger Frobenius monads.
\end{example}

\begin{definition}
Consider two dagger lax functors $F, G: \mathcal{D} \longrightarrow \mathcal{C}$ between dagger $2$-categories $ \mathcal{D}$, $\mathcal{C}$. A lax-natural transformation $\alpha: F \longrightarrow G$ -- having a family
\begin{align*}
    \tau_{A, B}: \mathcal{C}(\alpha_A, 1) \cdot G_{A, B} \longrightarrow \mathcal{C}(1, \alpha_B) \cdot F_{A, B}    
\end{align*}
of natural transformations -- is a dagger lax-natural transformation when $\tau$ satisfies the following additional coherence axiom: For every pair of arrows
\[
    \bfig
        \node A(0,0)[A]
        \node B(500,0)[B]
        \node C(1000,0)[C]
        \arrow/->/[A`B;f]
        \arrow/->/[B`C;g]
    \efig
\]
in $\mathcal{D}$, the following diagram in $\mathcal{C}$
\[
    \bfig
        \node 1(0,400)[G(g) \cdot G(f) \cdot \alpha_A]
        \node 2(1200,400)[G(g) \cdot \alpha_B \cdot F(f)]
        \node 3(2400,400)[\alpha_C \cdot F(g) \cdot F(f)]
        \node 4(0,0)[G(g \cdot f) \cdot \alpha_A]
        \node 5(2400,0)[\alpha_C \cdot F(g \cdot f)]
        \arrow/->/[2`1;1_{G(g)} * \tau^\dagger_f]
        \arrow/->/[2`3;\tau_g * 1_{F(f)}]
        \arrow/->/[1`4;\gamma^G_{f, g} * 1_{\alpha_A}]
        \arrow|r|/->/[3`5;1_{\alpha_C} * \gamma^F_{f, g}]
        \arrow|b|/->/[5`4;\tau^\dagger_{g \cdot f}]
    \efig
\]
commutes, where, for simplicity, we have written $\tau_f$ instead of $\big(\tau_{A, B}\big)_{f}$. Vertical composition of dagger lax-natural transformations is defined as for usual lax-natural transformations: given a dagger lax functor $H: \mathcal{D} \longrightarrow \mathcal{C}$ and a dagger lax-natural transformation $\beta: G \longrightarrow H$, the composite dagger lax-natural transformation $\delta = \beta \cdot \alpha : F \longrightarrow H$ is defined by the family of $1$-cells
\begin{align*}
    \big(\delta_A = \beta_A \cdot \alpha_A: F(A)\longrightarrow H(A)\big)_{A \in \mathcal{D}}
\end{align*}
in $\mathcal{C}$, and the family of $2$-cells
\begin{align*}
    \Big(\tau^\delta_f = \big(1_{\beta_B} * \tau^\alpha_f\big) \cdot \big(\tau^\beta_f * 1_{\alpha_A}\big) : H(f) \cdot \delta_A \longrightarrow \delta_B \cdot F(f)\Big)_{f \in \mathcal{D}(A, B)}
\end{align*}
in $\mathcal{C}$.
\end{definition}

\begin{definition}
Consider two dagger lax functors $F, G: \mathcal{D} \longrightarrow \mathcal{C}$ between dagger $2$-categories $ \mathcal{D}$, $\mathcal{C}$, and two dagger lax-natural transformations $\alpha, \beta: F \longrightarrow G$. A modification $\Xi : \alpha \rightsquigarrow \beta$ of the underlying lax-natural transformations is a dagger modification when the following additional property is satisfied: for every parallel pair $f, g: A \longrightarrow B$ of $1$-cells in $\mathcal{D}$ and every $2$-cell $\phi: f \longrightarrow g$ in $\mathcal{D}$, the following diagram in $\mathcal{C}$
\[
    \bfig
        \node 1(0,400)[G(f) \cdot \alpha_A]
        \node 2(900,400)[G(g) \cdot \beta_A]
        \node 3(0,0)[\alpha_B \cdot F(f)]
        \node 4(900,0)[\beta_B \cdot F(g)]
        \arrow/->/[2`1;G(\phi)^\dagger * \Xi^\dagger_A]
        \arrow|r|/->/[2`4;\tau^\beta_g]
        \arrow/->/[1`3;\tau^\alpha_f]
        \arrow|b|/->/[4`3;\Xi^\dagger_B * F(\phi)^\dagger]
    \efig
\]
commutes. The vertical and horizontal composition of modifications is defined as for usual modifications. Furthermore, the dagger on $2$-cells in $\mathcal{C}$ induces a dagger on dagger modifications.
\end{definition}

\begin{example}
We have already seen that a dagger lax functor $T : \mathbf{1} \longrightarrow \mathcal{D}$ is a dagger Frobenius monad $(D, t)$ in $\mathcal{D}$. Given another dagger lax functor $S : \mathbf{1} \longrightarrow \mathcal{D}$, a dagger lax-natural transformation $F: T \longrightarrow S$ is exactly a morphism of dagger Frobenius monads $(D, t) \longrightarrow (C, s)$ in $\mathcal{D}$. Given another such dagger lax-natural transformation $G : T \longrightarrow S$, a dagger modification $F \rightsquigarrow G$ is exactly a morphism in $\mathsf{DFMnd}(\mathcal{D})\big((D, t), (C, s)\big)$ from the morphism of dagger Frobenius monads corresponding to $F$, to the morphism of dagger Frobenius monads corresponding to $G$.
\end{example}

\begin{definition}
For dagger $2$-categories $\mathcal{D}$, $\mathcal{C}$, let $\mathsf{DagLax}_{\mathcal{D}, \mathcal{C}}$ denote the dagger $2$-category of dagger lax functors $\mathcal{D} \longrightarrow \mathcal{C}$, dagger lax-natural transformations between them, and dagger modifications between dagger lax-natural transformations. Let $\Delta_C: \mathcal{D} \longrightarrow \mathcal{C}$ denote the constant dagger $2$-functor on an object $C$ in $\mathcal{C}$. The dagger lax-limit of a dagger lax functor $F: \mathcal{D} \longrightarrow \mathcal{C}$, if it exists, is a pair $(L, \pi)$ where $L$ is an object of $\mathcal{C}$ and $\pi: \Delta_L \longrightarrow F$ is a dagger lax-natural transformation such that, for each object $C$ in $\mathcal{C}$, the dagger functor
\begin{align*}
    \mathcal{C}(C, L) \longrightarrow \mathsf{DagLax}_{\mathcal{D}, \mathcal{C}}[\Delta_C, F]
\end{align*}
of composition with $\pi$ is an isomorphism of dagger categories, $2$-natural in $C$.
\end{definition}

\begin{example}
Suppose a dagger Frobenius monad $(D, t)$ in a dagger $2$-category $\mathcal{D}$ has a Frobenius-Eilenberg-Moore object. The dagger lax-limit of $(D, t)$, considered as a dagger lax functor $\mathbf{1} \longrightarrow \mathcal{D}$, is the pair $\big(\mathsf{FEM}(D, t), \pi\big)$, where $\pi = (u^t, \xi)$ is the pair as defined below Theorem \ref{hk16}. For, to say that $(L, \pi)$ is a dagger lax-limit of $(D, t)$ -- when it exists -- is to give a pair $\pi = (h, \sigma)$ with $h: L \longrightarrow D$ a $1$-cell and $\sigma : th \longrightarrow h$ a $2$-cell in $\mathcal{D}$ such that $(h, \sigma): (L, 1) \longrightarrow (D, t)$ is a morphism of dagger Frobenius monads, and such that the following universal property is satisfied: for any $C$ in $\mathcal{D}$, $1$-cell $g: C \longrightarrow D$ and $2$-cell $\gamma : tg\longrightarrow g$ in $\mathcal{D}$ such that $(g, \gamma): (C, 1) \longrightarrow (D, t)$ is a morphism of dagger Frobenius monads, there exists a unique $1$-cell $n: C \longrightarrow L$ such that $hn = g$ and $\sigma n = \gamma$. But, by Proposition \ref{prop2} this is exactly to say that $L$ is a Frobenius-Eilenberg-Moore object for $(D, t)$.
\end{example}
\refs

\bibitem [HK, 2016]{HK16} C. Heunen and M. Karvonen.  Monads on dagger categories. \emph{Theory and Applications of Categories}, 31(35):1016–1043, 2016

\bibitem [HK, 2015]{HK15} C. Heunen and M. Karvonen.  Reversible Monadic Computing. \emph{Electronic Notes in Theoretical Computer Science}, 319:217-237, 2015

\bibitem[HV, 2019]{HV19} C. Heunen and J. Vicary, Categories for Quantum Theory: An Introduction. \emph{Oxford University Press}, Oxford Graduate Texts in Mathematics, 2019.

\bibitem[Karvonen, 2019]{Kar19} M. Karvonen, PhD Thesis: The Way of the Dagger. \emph{University of Edinburgh}, 2019.

\bibitem[Kelly, 2005]{Kel} G. M. Kelly. Basic Concepts of Enriched Category Theory. \emph{Reprints in Theory and Applications of Categories}, 1(10):1-136, 2005

\bibitem[Lauda, 2006]{Lau} A. Lauda. Frobenius algebras and ambidextrous adjunctions. \emph{Theory and Applications of Categories}, 16(4):84–122, 2006.

\bibitem[Street, 2004]{Str04} R. Street, Frobenius monads and pseudomonoids. \emph{Journal of Mathematical Physics}, 45(10):3930–3948, 2004

\bibitem[Street, 1976]{Str76} R. Street, Limits indexed by category-valued 2-functors. \emph{Journal of Pure and Applied Algebra}, 8(2):149-181, 1976.

\bibitem[Street, 1972]{Str72} R. Street, The formal theory of monads. \emph{Journal of Pure and Applied Algebra}, 2(2):149-168, 1972.

\bibitem[LS, 2002]{LS02} R. Street and S. Lack, The formal theory of monads II. \emph{Journal of Pure and Applied Algebra}, 175(1):243-265, 2002.

\endrefs
\end{document}